\newlength{\defbaselineskip}
\newcommand{\setlinespacing}[1]%
           {\setlength{\baselineskip}{#1 \defbaselineskip}}
\numberwithin{equation}{section}
\newtheorem{thm}{Theorem}[section]
\newtheorem{lem}[thm]{Lemma}
\newtheorem{prop}[thm]{Proposition}
\theoremstyle{definition}
\newtheorem{defn}[thm]{Definition}
\theoremstyle{remark}
\newtheorem{rem}[thm]{Remark}
\numberwithin{equation}{section}
\begin{document}

\title[Critical inhomogeneous Hartree equations]
{On well-posedness for inhomogeneous Hartree equations in the critical case}

\author{Seongyeon Kim}

\thanks{This work was supported by a KIAS Individual Grant (MG082901) at Korea Institute for Advanced Study and the POSCO Science Fellowship of POSCO TJ Park Foundation.}

\subjclass[2020]{Primary: 35A01, 35Q55; Secondary: 42B35}
\keywords{Well-posedness, inhomogeneous Hartree equations, Lorentz spaces}

\address{School of Mathematics, Korea Institute for Advanced Study, Seoul 02455, Republic of Korea}
\email{synkim@kias.re.kr}

\begin{abstract}
We study the well-posedness for the inhomogeneous Hartree equation $i\partial_t u + \Delta u = \lambda(I_\alpha \ast |\cdot|^{-b}|u|^p)|x|^{-b}|u|^{p-2}u$ in $H^s$, $s\ge0$.
Until recently, its well-posedness theory has been intensively studied, focusing on solving the problem for the critical index $p=1+\frac{2-2b+\alpha}{n-2s}$ with $0\le s \le 1$, but the case $1/2\leq s \leq 1$ is still an open problem.
In this paper, we develop the well-posedness theory in this case, especially including the energy-critical case.
To this end, we approach to the matter based on the Sobolev-Lorentz space which can lead us to perform a finer analysis for this equation. This is because it makes it possible to control the nonlinearity involving the singularity $|x|^{-b}$ as well as the Riesz potential $I_\alpha$ more effectively.

\end{abstract}

\maketitle

\section{Introduction}
In this paper, we are concerned with the Cauchy problem for the inhomogeneous Hartree equation
\begin{equation}\label{HE}
	\begin{cases}
		i \partial_t u + \Delta u = \lambda(I_\alpha \ast |\cdot|^{-b} |u|^p)|x|^{-b} |u|^{p-2}u, \quad (x,t) \in \mathbb{R}^n \times \mathbb{R}, \\
		u(x,0)= u_0(x),
	\end{cases}
\end{equation}
where $p\ge 2$, $b>0$ and $\lambda = \pm1$.
Here, the case $\lambda=1$ is \textit{defocusing}, while the case $\lambda=-1$ is \textit{focusing}.
The Riesz potential $I_\alpha$ is defined on $\mathbb{R}^n$ by 
$$I_{\alpha}:=\frac{\Gamma(\frac{n-\alpha}{2})}{\Gamma({\frac \alpha2})\pi^{\frac n2} 2^{\alpha} |\cdot|^{n-\alpha}}, \quad 0<\alpha<n.$$
The problem \eqref{HE} arises in the physics of laser beams and of multiple-particle systems \cite{GM, MPT}.
The homogeneous case $b=0$ in \eqref{HE} is called the Hartree equation (or Choquard equation) and has several physical origins such as quantum mechanics and Hartree-Fock theory. In particular, if $b=0$ and $p=2$, it models the dynamics of boson stars, where the potential is the Newtonian gravitational potential in the appropriate physical units.

Before looking at known results for the Cauchy problem \eqref{HE}, we first recall the critical Sobolev index.
Note that if $u(x,t)$ is a solution of \eqref{HE} so is $$u_\delta(x,t)=\delta^{\frac{2-2b+\alpha}{2(p-1)}} u(\delta x, \delta^2 t),$$ with the rescaled initial data $u_{\delta,0}=u_{\delta}(x,0)$ for all $\delta>0$.
Then we see that
\begin{equation*}
	\|u_{\delta,0}\|_{\dot H^s}=\delta^{s-\frac n2 +\frac{2-2b+\alpha}{2(p-1)}}\|u_0\|_{\dot H^s},
\end{equation*}
from which the critical Sobolev index is given by $s_c=\frac{n}{2}-\frac{2-2b+\alpha}{2(p-1)}$ (alternatively $p=1+\frac{2-2b+\alpha}{n-2s_c}$) which determines the scale-invariant Sobolev space $\dot H^{s_c}$.
In this regard, one can divide the matter into three cases, $s_c=0$, $s_c=1$ and $0<s_c<1$.
The case $s_c=0$ (alternatively $p=p_\ast:=1+\frac{\alpha+2-2b}{n}$) is referred to as the mass-critical (or $L^2$-critical). If $s_c=1$ (alternatively $p=p^\ast:=1+\frac{2-2b+\alpha}{n-2}$) the problem is called the energy-critical (or $H^1$-critical), and it is known as the mass-supercritical and energy-subcritical if $0<s_c<1$. 
Therefore, we are only interested in the cases $0\leq s_c \leq 1$ in this paper.

The well-posedness theory of the Hartree equation ($b=0$ in \eqref{HE}) has been extensively studied over the past few decades and is well understood. (See, for example, \cite{MXZ, CHO, GW, Saa} and references therein.)
For the inhomogeneous model \eqref{HE} drawn attention in recent several years, its well-posedness theory has been developed in various attempts to treat the singularity $|x|^{-b}$ in the nonlinearity which further complicates the problem.
The equation \eqref{HE} was first studied by Alharbi and Saanouni \cite{AS} using an Gargliardo-Nirenberg type inequality adapted for the nonlinearity.
They proved that \eqref{HE} is locally well-posed in $L^2$ if $2\leq p < p_\ast$ and in $H^1$ if $2\leq p<p^\ast$.
After that, Saanouni and Alharbi \cite{SA} treated the intermediate case in the sense that \eqref{HE} is locally well-posed in $\dot H^1 \cap \dot H^{s_c}$, $0<s_c<1$, if $2\leq p < p^\ast$, but this does not imply the inter-critical case $H^{s_c}$.
For related results on the scattering theory, see also \cite{SX,SP,X}.
Recently, in \cite{KLS1}, the well-posedness in the critical case was partially solved by introducing a weighted space which makes it possible to deal with the nonlinearity efficiently.
Indeed, they proved that \eqref{HE} is locally well-posed in $H^{s_c}$ for $0\leq s_c <1/2$, and in $\dot H^{s_c}$ for $-1/2<s_c<0$ which is the critical case below $L^2$.
However, the remaining case $1/2 \leq s_c \leq 1$ is still an open problem.

The main contribution of this paper is to completely settle the critical case of $0\leq s_c\leq 1$, especially including the energy-critical case $H^1$. 
To this end, we approach to the matter based on the following Sobolev-Lorentz spaces defined as 
$$W_{p,q}^s(\mathbb{R}^n):= (I-\Delta)^{s/2}L^{p,q}(\mathbb{R}^n),$$
where $L^{p,q}$ denote the Lorenz spaces. We will give the details in Section \ref{se2}.
These spaces can lead us to perform a finer analysis for the inhomogeneous Hartree equation \eqref{HE}.
This is because it makes it possible to control the nonlinearity involving the singularity $|x|^{-b}$ as well as the Riesz potential $I_\alpha$ more effectively. Particularly, the power function $|x|^{-\gamma}$, $\gamma>0$ belongs to $L^{p,\infty}$ if $p=n/\gamma$, but is not in $L^p$.

Before stating our results, we introduce the following definition. 
\begin{defn}
Let $n\ge3$. We say that $(q,r)$ is an admissible pair if it satisfies 
\begin{equation}
2\leq q \leq \infty,\quad 2\leq r \leq \frac{2n}{n-2} \quad \text{and} \quad \frac{2}{q}+\frac{n}{r}=\frac{n}{2}.
\end{equation}
\end{defn} 

Our main result is the following well-posedness in the critical case $p=1+\frac{2-2b+\alpha}{n-2s}$ when $0\leq s \leq 1$. 

\begin{thm}\label{Hloc}
Let $n\ge3$ and $0\leq s \leq 1$.
Assume that
\begin{equation}\label{as-1}
\max\Big\{\frac{n-2}{3}, n-4\Big\}<\alpha<n
\end{equation}
and
\begin{equation}\label{as-2}
\max\Big\{0, \frac{\alpha}{2}+1-\frac{(n-2s)(n+4+\sqrt{9n^2-8n+16})}{8(n-2)}\Big\}<b\leq \frac{\alpha}{2}+1-\frac{n-2s}{2}.
\end{equation}
Then for $u_0 \in H^s(\mathbb{R}^n)$ there exist $T>0$ and a unique solution $u \in C([0,T); H^s) \cap L^q ([0,T);W_{r,2}^s)$ to the problem \eqref{HE} with $p=1+\frac{2-2b+\alpha}{n-2s}$ if $(q,r)$ is any admissible pair satisfying 
\begin{equation}\label{as-r}
\max\Big\{s, s+\frac{\alpha-b}{p},\frac{n}{2}-\frac{2}{2p-1}\Big\}<\frac{n}{r}<\min\Big\{\frac{s(p-1)-b+n}{p} ,\frac{n}{2}-\frac{1}{2p-1}\Big\}.
\end{equation}
Furthermore, the continuous dependence on the initial data holds.
\end{thm}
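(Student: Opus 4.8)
The plan is to run a contraction-mapping argument on the Duhamel formulation
$$\Phi(u)(t)=e^{it\Delta}u_0-i\lambda\int_0^t e^{i(t-\tau)\Delta}\,\mathcal N(u)(\tau)\,d\tau,\qquad \mathcal N(u)=\big(I_\alpha\ast|\cdot|^{-b}|u|^p\big)|x|^{-b}|u|^{p-2}u,$$
in a closed ball $X_T$ of $C([0,T);H^s)\cap L^q([0,T);W_{r,2}^s)$, say $X_T=\{u:\|u\|_{L^\infty_tH^s}\le 2C\|u_0\|_{H^s},\ \|u\|_{L^q_tW_{r,2}^s([0,T))}\le 2\rho\}$ equipped with the complete metric coming from the $L^q_tW_{r,2}^s$ distance, where $(q,r)$ is the admissible pair of \eqref{as-r} and $\rho$ will be chosen comparable to $\|e^{it\Delta}u_0\|_{L^q_tW_{r,2}^s([0,T))}$; below I write $\|u\|_{X_T}=\|u\|_{L^\infty_tH^s}+\|u\|_{L^q_tW_{r,2}^s([0,T))}$. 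The linear input is the Lorentz-refined Strichartz estimate $\|e^{it\Delta}f\|_{L^q_tL^{r,2}_x}\lesssim\|f\|_{L^2}$ together with its retarded and dual versions for admissible pairs --- a consequence of the Keel--Tao estimates and real interpolation --- lifted to the Sobolev--Lorentz scale by commuting with $(I-\Delta)^{s/2}$, as recorded in Section~\ref{se2}. Since $u_0\in H^s$ forces $e^{it\Delta}u_0\in L^q_tW_{r,2}^s(\mathbb R)$, dominated convergence gives $\rho=\rho(T)\to0$ as $T\to0$; this, rather than any positive power of $T$, is what furnishes the smallness needed in the scaling-critical regime.

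The heart of the matter is the nonlinear estimate: for the companion admissible pair $(\tilde q,\tilde r)$ produced by the argument, one must prove
$$\big\|\mathcal N(u)-\mathcal N(v)\big\|_{L^{\tilde q'}_tW_{\tilde r',2}^s([0,T))}\lesssim\big(\|u\|_{X_T}+\|v\|_{X_T}\big)^{2p-2}\|u-v\|_{X_T},$$
and in particular its diagonal form $\|\mathcal N(u)\|_{L^{\tilde q'}_tW_{\tilde r',2}^s}\lesssim\|u\|_{L^q_tW_{r,2}^s}^{2p-1}$. I would establish these in three steps. First, apply the fractional Leibniz rule (Kato--Ponce) and the fractional chain rule --- in their Lorentz-space versions --- to distribute $(I-\Delta)^{s/2}$ over the factors of $\mathcal N(u)$: the derivative either produces $(I-\Delta)^{s/2}u$, handled through $W_{r,2}^s$ and its embeddings; it lands on a weight $|x|^{-b}$, where it is absorbed into $|x|^{-b-s}\in L^{n/(b+s),\infty}$; or it lands on the Riesz-potential factor, where it passes through $I_\alpha$ up to acceptable lower-order terms. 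Second, estimate the convolution by the Lorentz form of the Hardy--Littlewood--Sobolev inequality, $\|I_\alpha\ast g\|_{L^{\rho_1,\sigma}}\lesssim\|g\|_{L^{\rho_2,\sigma}}$ with $\tfrac1{\rho_2}=\tfrac1{\rho_1}+\tfrac\alpha n$. Third, combine the pieces --- the weights $|x|^{-b}\in L^{n/b,\infty}$ and $|x|^{-b-s}\in L^{n/(b+s),\infty}$, the powers $|u|^{p-1}$ and $|u|^{p-2}$, and the differentiated factor --- by Hölder's inequality on Lorentz spaces (using the second index $2$ on the $u$-factors and $\infty$ on the weights) and Hölder in time to collect the $L^{q}_t$ norms, together with the Sobolev--Lorentz embedding $W_{r,2}^s\hookrightarrow L^{\rho,2}$, $\tfrac1\rho=\tfrac1r-\tfrac sn$, applied to the undifferentiated copies of $u$. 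The Lorentz scale is indispensable here for exactly the reason stressed in the introduction: $|x|^{-b}$ and $|x|^{-b-s}$ lie only in weak-$L^p$, and it is $L^{p,\infty}$-Hölder, not $L^p$-Hölder, that makes the product estimate close.

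The last ingredient is the exponent bookkeeping, which is where the hypotheses enter. With the $2p-1$ copies of $u$ collected into the critical norm $L^q_tW_{r,2}^s$, the companion pair $(\tilde q,\tilde r)$ is forced by time-Hölder, $\tfrac1{\tilde q'}=\tfrac{2p-1}{q}$, and by the criticality relation $p=1+\tfrac{2-2b+\alpha}{n-2s}$ this choice is automatically consistent with the spatial Hölder count; the admissibility $2\le\tilde q\le\infty$ of the companion pair is exactly $\tfrac n2-\tfrac2{2p-1}<\tfrac nr<\tfrac n2-\tfrac1{2p-1}$. The condition $\tfrac nr>s$ is precisely the Sobolev--Lorentz embedding used on the undifferentiated copies; $\tfrac nr>s+\tfrac{\alpha-b}{p}$ keeps the intermediate exponent below $n/\alpha$, so that the Hardy--Littlewood--Sobolev step outputs an honest Lorentz space; and $\tfrac nr<\tfrac{s(p-1)-b+n}{p}$ keeps the Lorentz-Hölder exponent inside $I_\alpha\ast(\cdot)$ admissible once the derivative falling on the weights is accounted for. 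Finally, \eqref{as-1}--\eqref{as-2} are precisely the conditions on $(n,\alpha,b,s)$ under which the interval \eqref{as-r} is nonempty; the binding comparison ``lower endpoint $<$ upper endpoint'' reduces to a quadratic inequality in $p$ whose critical value is where the expression $\sqrt{9n^2-8n+16}$ in \eqref{as-2} originates, while \eqref{as-1} and the second bound in \eqref{as-2} ensure the remaining Lorentz and admissibility exponents stay in range.

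Granting these estimates, the Strichartz bounds and the nonlinear estimate give $\|\Phi(u)\|_{X_T}\le C\|u_0\|_{H^s}+C\rho(T)^{2p-1}$ and a matching difference bound on $X_T$, so $\Phi$ is a contraction for $T$ small; this yields existence of a solution $u\in C([0,T);H^s)\cap L^q([0,T);W_{r,2}^s)$ and uniqueness in $X_T$. The difference estimate promotes uniqueness to the whole class $C([0,T);H^s)\cap L^q([0,T);W_{r,2}^s)$, and, again through the Strichartz bounds, gives continuous dependence on $u_0$; the $C_tH^s$ regularity of the solution follows from the Duhamel identity and the linear estimates. The main obstacle I anticipate is the nonlinear estimate of the second paragraph --- simultaneously controlling the fractional derivative against the singular weight $|x|^{-b}$ and the non-smooth factor $|u|^{p-2}u$, while threading every exponent through the narrow window \eqref{as-r} --- and the tightest instance is the energy-critical endpoint $s=1$, where $r$ is driven toward $\tfrac{2n}{n-2}$.
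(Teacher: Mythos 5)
Your overall architecture matches the paper's closely: contraction on a ball with size driven by $\|e^{it\Delta}u_0\|_{L^q_tW^s_{r,2}}\to0$ (rather than a power of $T$), nonlinear estimates via Lorentz-space H\"older, Sobolev--Lorentz embedding, Hardy--Littlewood--Sobolev, and the fractional Leibniz/chain rules, and the same bookkeeping that turns \eqref{as-r} and its nonemptiness into \eqref{as-1}--\eqref{as-2}. The exponent discussion in your third paragraph is essentially what the paper carries out in Subsection 3.3.

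There is, however, a genuine gap in the Lipschitz estimate. You equip $X_T$ with the metric coming from $L^q_t W^s_{r,2}$ and propose to prove
\[
\big\|\mathcal N(u)-\mathcal N(v)\big\|_{L^{\tilde q'}_tW^{s}_{\tilde r',2}}\lesssim\big(\|u\|_{X_T}+\|v\|_{X_T}\big)^{2p-2}\|u-v\|_{X_T},
\]
i.e.\ with $s$ derivatives landing on the difference. That estimate does not follow from the tools in Section~\ref{se2}: the fractional chain rule (Lemma~\ref{FCR}) bounds $\|(-\Delta)^{s/2}F(u)\|$ through $F'(u)$ and $(-\Delta)^{s/2}u$, but gives no control of $(-\Delta)^{s/2}\big(|u|^{p-2}u-|v|^{p-2}v\big)$ in terms of $(-\Delta)^{s/2}(u-v)$; for $2\le p<3$ the derivative $|u|^{p-2}$ is only H\"older-continuous, so there is no clean ``chain rule for differences'' here. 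The paper avoids this entirely by choosing the contraction metric $d(u,v)=\|u-v\|_{L^q_t(I;L^{r,2}_x)}$ \emph{without} derivatives (the ball $X(T,M,N)$ is still complete in this weaker metric), and by proving two separate estimates: a Lipschitz estimate \eqref{non} in which $u-v$ appears only in $L^{r,2}_x$ and the $u,v$ weights carry the $\dot W^s_{r,2}$ norms via Sobolev embedding, and a boundedness estimate \eqref{non2} in $\dot W^s_{\tilde r',2}$ for $F(u)$ alone, which is where the Leibniz/chain rules actually enter. You should replace your Sobolev-level difference bound with this split; everything else in your proposal then goes through as intended, including the scattering and continuous-dependence steps, which use the same two estimates under the dual and retarded Strichartz bounds.
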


\begin{rem}
In \cite{KLS1}, the authors showed that \eqref{HE} is locally well-posed in $H^s$ for $0\leq s<1/2$, if $p=1+\frac{2-2b+\alpha}{n-2s}$, 
\begin{equation}\label{pre}
n-2<\alpha<n \quad \text{and} \quad \max\Big\{0,\frac{\alpha-n}{2}+\frac{(n+2)s}{n}\Big\} <b \leq \frac{\alpha}2+1-\frac{n-2s}{2}.
\end{equation}
Since $\max\{\frac{n-2}3,n-4\}<n-2$, the range of $\alpha$ in the theorem contains the one of $\alpha$ in \eqref{pre}.
For fixed $\alpha$ and $s$ with $0\leq s<1/2$, since the lower bound of $b$ in \eqref{as-2} is smaller than the lower one of $b$ in \eqref{pre}, the range of $b$ in the theorem also contains the one of $b$ in \eqref{pre}.
Therefore, the theorem not only fills completely the gap $1/2\leq s\leq 1$ but also extends the existing ranges of $\alpha$ and $b$.
\end{rem}

We also provide the small data global well-posedness and the scattering results as follows:
\begin{thm}\label{Hglo}
Under the same conditions as in Theorem \ref{Hloc} and the smallness assumption on $\|u_0\|_{H^s}$, the local solution extends globally in time with 
\begin{equation}
    u \in C([0,\infty); H^s) \cap L^q ([0,\infty);W_{r,2}^s).
\end{equation}
Furthermore, the solution scatters in $ H^s$, i.e., there exists $\phi \in H^s$ such that
\begin{equation}
    \lim_{t\rightarrow \infty} \|u(t) - e^{it\Delta} \phi\|_{H^s}=0.
\end{equation}
\end{thm}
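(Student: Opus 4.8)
The plan is to run, now on the half-line $[0,\infty)$, the very contraction scheme that yields Theorem \ref{Hloc}, exploiting that in the critical case $p=1+\frac{2-2b+\alpha}{n-2s}$ the governing nonlinear estimate is scale-invariant and hence carries no positive power of the time length: the smallness needed to close the argument is supplied directly by the smallness of $\|u_0\|_{H^s}$ rather than by shrinking the interval. Fix an admissible pair $(q,r)$ satisfying \eqref{as-r} and set $\|u\|_{X}=\|u\|_{L^\infty([0,\infty);H^s)}+\|u\|_{L^q([0,\infty);W_{r,2}^s)}$; working on the closed ball $X_\rho=\{\|u\|_X\le\rho\}$ with $\rho=2C\|u_0\|_{H^s}$, metrized by the $L^q([0,\infty);W_{r,2}^s)$ distance so that it is complete, I would study the Duhamel map $\Phi(u)(t)=e^{it\Delta}u_0-i\lambda\int_0^t e^{i(t-\tau)\Delta}\mathcal N(u)(\tau)\,d\tau$, where $\mathcal N(u)=(I_\alpha\ast|\cdot|^{-b}|u|^p)|x|^{-b}|u|^{p-2}u$ (the sign $\lambda=\pm1$ plays no role at this perturbative level).

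First I would record the linear input over the whole half-line: the Lorentz-refined Strichartz estimates discussed in Section \ref{se2} give $\|e^{it\Delta}u_0\|_X\lesssim\|u_0\|_{H^s}$ and $\|\int_0^t e^{i(t-\tau)\Delta}F\,d\tau\|_X\lesssim\|F\|_{L^{\tilde q'}([0,\infty);W_{\tilde r',2}^s)}$ for admissible $(\tilde q,\tilde r)$, with all time norms over $[0,\infty)$. Next I would invoke the nonlinear estimate established while proving Theorem \ref{Hloc}: under \eqref{as-1}, \eqref{as-2} and \eqref{as-r} there is an admissible pair $(\tilde q,\tilde r)$ for which $\|\mathcal N(u)-\mathcal N(v)\|_{L^{\tilde q'}_tW_{\tilde r',2}^s}\lesssim(\|u\|_X+\|v\|_X)^{2p-2}\|u-v\|_X$, again with the time integrations over $[0,\infty)$ --- this is exactly the place where the criticality of $p$ enters, since it forces the exponents to balance with no leftover power of $T$. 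Combining, $\Phi$ maps $X_\rho$ into itself, $\|\Phi(u)\|_X\le C\|u_0\|_{H^s}+C'\rho^{2p-1}\le\rho$, and is a contraction there, $\|\Phi(u)-\Phi(v)\|_X\le C'\rho^{2p-2}\|u-v\|_X\le\tfrac12\|u-v\|_X$, as soon as $\|u_0\|_{H^s}$ is small enough that $C'\rho^{2p-2}\le\tfrac12$. The unique fixed point is the sought global solution $u\in C([0,\infty);H^s)\cap L^q([0,\infty);W_{r,2}^s)$, continuity into $H^s$ and uniqueness being read off from the Duhamel formula and the linear bounds as in the local theory, and it coincides with the solution of Theorem \ref{Hloc} on the common interval.

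Finally, for the scattering statement I would put $\phi=u_0-i\lambda\int_0^\infty e^{-i\tau\Delta}\mathcal N(u)(\tau)\,d\tau$ and check convergence of the improper integral in $H^s$: applying the inhomogeneous Strichartz estimate on $[T,\infty)$ together with the nonlinear estimate gives $\|\int_T^\infty e^{-i\tau\Delta}\mathcal N(u)\,d\tau\|_{H^s}\lesssim\|u\|_{L^q([T,\infty);W_{r,2}^s)}^{2p-1}$, which tends to $0$ as $T\to\infty$ because $u\in L^q([0,\infty);W_{r,2}^s)$; the same computation yields $\|u(t)-e^{it\Delta}\phi\|_{H^s}=\|\int_t^\infty e^{i(t-\tau)\Delta}\mathcal N(u)(\tau)\,d\tau\|_{H^s}\lesssim\|u\|_{L^q([t,\infty);W_{r,2}^s)}^{2p-1}\to0$ as $t\to\infty$. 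The genuinely delicate point here is the same one already met in the proof of Theorem \ref{Hloc}: fitting the doubly singular nonlinearity --- carrying both the local weight $|x|^{-b}$ and the Riesz kernel $I_\alpha$ --- into the dual Strichartz space of an \emph{admissible} pair by means of the Sobolev-Lorentz spaces, which is what the hypotheses \eqref{as-1}, \eqref{as-2} and \eqref{as-r} are tailored for; once that estimate is in hand with the time norms taken over all of $[0,\infty)$, the globalization and scattering steps are routine.
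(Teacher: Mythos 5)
Your proposal is correct and follows essentially the same route as the paper: use the Strichartz bound $\|e^{it\Delta}u_0\|_{L^q_t W^s_{r,2}}\lesssim\|u_0\|_{H^s}$ to replace the local smallness condition by smallness of the data, run the contraction on $[0,\infty)$ (the critical scaling leaves no positive power of $T$), and then deduce scattering from the tail estimate $\|u\|^{2p-1}_{L^q([t,\infty);W^s_{r,2})}\to 0$. One small imprecision worth flagging: you claim a Lipschitz estimate for $\mathcal N(u)-\mathcal N(v)$ with $W^s_{\tilde r',2}$ on the output side and metrize the ball by the $L^q W^s_{r,2}$ distance, but Proposition \ref{prop} only provides the difference estimate \eqref{non} in $L^{\tilde q'}_tL^{\tilde r',2}_x$ (no derivative) together with the boundedness estimate \eqref{non2} in $W^s$; accordingly, the paper contracts in the weaker $L^q_t L^{r,2}_x$ metric while keeping the $W^s$-bound only as a closed-ball constraint — the standard Kato trick — and your argument should be read that way.
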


\

\noindent\textit{Outline of the paper.}
In Section \ref{se2} we introduce some function spaces and relevant properties that we need.
To prove the theorems, in Section \ref{se3}, we obtain nonlinear estimates (Proposition \ref{prop}) relying on Sobolev type inequalities in Lorentz spaces.
These estimates will play a key role in Section \ref{se4}, when proving the well-posedness results by applying the contraction mapping principle.

\

Throughout this paper, the letter $C$ stands for a positive constant which may be different at each occurrence.
We also denote $A\lesssim B$ to mean $A\leq CB$ with unspecified constants $C>0$.

\section{Preliminaries}\label{se2}
In this section we introduce some function spaces and relevant properties which will be used in this paper. 
We also recall the Strichartz estimates in the Lorentz spaces.
\subsection{Lorentz spaces}
Let $f$ be a measurable function on $\mathbb{R}^n$. 
The decreasing rearrangement of $f$, denoted $f^\ast$, is defined by 
$$f^*(s)= \inf \{\lambda>0: d_f(\lambda)\leq s\}$$
where $d_{f}(\lambda)$ is the distribution function of $f$ defined as 
$$d_f(\lambda)=\big|\{x\in\mathbb{R}^n: |f(x)|>\lambda\}\big|.$$
Here, $|A|$ denotes the Lebesgue measure of a set $A\subset \mathbb{R}^n$.
Then, for $1\leq p,q\leq\infty$, the Lorentz space $L^{p,q}(\mathbb{R}^n)$ is defined by
$$L^{p,q}(\mathbb{R}^n) = \{f: f \,\, \text{is measurable on $\mathbb{R}^n$}, \|f\|_{L^{p,q}(\mathbb{R}^n)}<\infty \}$$
where 
$$\|f\|_{L^{p,q}}=
\begin{cases}
&\Big(\int_0^{\infty} (s^{\frac1p} f^*(s))^{q}\frac{ds}{s}\Big)^{\frac1q} \quad \text{if} \quad q<\infty, \\
&\sup_{s>0} t^{\frac1p} f^*(s)\quad \text{if} \quad q=\infty.
\end{cases}$$
Note that $L^{p,q}$ is a quasi-Banach space since the quantity $\|\cdot\|_{L^{p,q}}$ does not satisfy the triangle inequality. 
When $1<p<\infty$ and $1\leq q \leq \infty$, the space $L^{p,q}$ can be normed to become a Banach space via an equivalent norm defined by
$$|||f|||_{L^{p,q}}=\begin{cases} \Big(\int_0^{\infty} \big(s^{\frac1p}f^{**}(s)\big)^q \frac{ds}{s}\Big)^{\frac1q} \quad \text{if}\quad q<\infty,\\
\sup_{s>0}s^{1/p}f^{**}(s)\quad \text{if} \quad q=\infty,
\end{cases}$$
where $f^{**}(t)=\frac1t\int_0^t f^*(s)ds,$ $t>0$.

We now recall some simple properties for the Lorentz spaces.
For any fixed $p$, the Lorentz spaces $L^{p,q}$ increase as the exponent $q$ increases. 
In other words, for $0<p\leq \infty$ and $0<q<r\leq\infty$, there exists $C>0$ such that 
\begin{equation}\label{emb}
\|f\|_{L^{p,r}}\leq C\|f\|_{L^{p,q}},
\end{equation}
which implies $L^{p,q}\subset L^{p,r}$.
Also, for $0<p,r<\infty$ and $0<q\leq\infty$ we have 
\begin{equation*}
\||f|^r\|_{L^{p,q}}=\|f\|^r_{L^{pr,qr}}.
\end{equation*}
In particular, the space $L^{p,\infty}$ includes the singularity $|x|^{-\gamma}$ if $p=\frac{n}{\gamma}$ (i.e., $|x|^{-\gamma}\in L^{\frac{n}{\gamma},\infty}$), but which does not belong to any Lebesgue space.

Finally, we introduce some well-known lemmas in Lorentz space which will be used in the next section.
\begin{lem}(H\"older's inequality)\label{H}
Let $1<p,p_1 , p_2 <\infty$ and $1\leq q, q_1, q_2 \leq \infty$. Then we have
\begin{equation}\label{Ho}
\|fg\|_{L^{p,q}} \leq C \|f\|_{L^{p_1,q_1}} \|g\|_{L^{p_2,q_2}},
\end{equation}
where 
$$\frac1p=\frac1{p_1}+\frac1{p_2} \quad\text{and}\quad \frac1q=\frac1{q_1}+\frac1{q_2}.$$
\end{lem}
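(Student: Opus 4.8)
The plan is to deduce Lemma~\ref{H} from two ingredients: the submultiplicativity of the decreasing rearrangement, and the ordinary H\"older inequality on the half-line $(0,\infty)$ with respect to the measure $dt/t$. No property of the Schr\"odinger flow or of Sobolev spaces enters; this is a purely real-variable statement, and the restrictions $1<p_i<\infty$ play no essential role (they merely match the context in which the lemma is used), since the argument works directly with the quasinorm $\|\cdot\|_{L^{p,q}}$.

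\emph{Step 1: a rearrangement inequality.} First I would record the pointwise bound
\begin{equation*}
(fg)^*(t)\le f^*(t/2)\,g^*(t/2),\qquad t>0.
\end{equation*}
This rests on the elementary inclusion $\{|fg|>\lambda_1\lambda_2\}\subset\{|f|>\lambda_1\}\cup\{|g|>\lambda_2\}$, which gives $d_{fg}(\lambda_1\lambda_2)\le d_f(\lambda_1)+d_g(\lambda_2)$ for all $\lambda_1,\lambda_2>0$. Choosing $\lambda_1=f^*(s_1)$ and $\lambda_2=g^*(s_2)$ and using that $d_f(f^*(s))\le s$ (which follows from the definition of $f^*$ and the right-continuity of the distribution function), one gets $d_{fg}\big(f^*(s_1)g^*(s_2)\big)\le s_1+s_2$, hence $(fg)^*(s_1+s_2)\le f^*(s_1)g^*(s_2)$; the claim follows on taking $s_1=s_2=t/2$.

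\emph{Step 2: insertion into the Lorentz quasinorm and H\"older on $(0,\infty)$.} For $q<\infty$ I would then estimate
\begin{equation*}
\|fg\|_{L^{p,q}}^q=\int_0^\infty\big(t^{1/p}(fg)^*(t)\big)^q\,\frac{dt}{t}\le\int_0^\infty\big(t^{1/p}f^*(t/2)g^*(t/2)\big)^q\,\frac{dt}{t},
\end{equation*}
and, after the substitution $t\mapsto 2t$ and the splitting $t^{1/p}=t^{1/p_1}t^{1/p_2}$ (valid since $1/p=1/p_1+1/p_2$), the right-hand side equals
\begin{equation*}
2^{q/p}\int_0^\infty\big(t^{1/p_1}f^*(t)\big)^q\big(t^{1/p_2}g^*(t)\big)^q\,\frac{dt}{t}.
\end{equation*}
Since $1/q=1/q_1+1/q_2$, the exponents $q_1/q$ and $q_2/q$ are conjugate and $\ge 1$, so H\"older's inequality for the measure $dt/t$ bounds this by $2^{q/p}\|f\|_{L^{p_1,q_1}}^q\|g\|_{L^{p_2,q_2}}^q$. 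Taking $q$-th roots yields \eqref{Ho} with $C=2^{1/p}$.

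\emph{Step 3: the endpoint cases.} Finally I would dispose of the cases in which some of $q,q_1,q_2$ equal $\infty$ by the same mechanism with suprema in place of integrals: if $q=\infty$ then necessarily $q_1=q_2=\infty$ and Step~1 gives $\sup_{t}t^{1/p}(fg)^*(t)\le 2^{1/p}\big(\sup_t t^{1/p_1}f^*(t)\big)\big(\sup_t t^{1/p_2}g^*(t)\big)$; if exactly one of $q_1,q_2$ is infinite, one factors out the corresponding supremum and estimates the remaining integral in the other variable directly. I do not expect a genuine obstacle here: the only real content is the rearrangement inequality of Step~1, and everything after is H\"older on a measure space together with routine bookkeeping of the endpoint combinations (the constant $C$ in \eqref{Ho} absorbing the factor that arises from working with the quasinorm rather than a norm).
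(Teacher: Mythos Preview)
Your argument is correct and is the standard proof of H\"older's inequality in Lorentz spaces (essentially O'Neil's original argument). The paper, however, does not prove this lemma at all: it is stated among ``some well-known lemmas in Lorentz space'' and invoked without proof, so there is nothing to compare against beyond noting that your write-up fills in a result the paper simply quotes.
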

\begin{lem}(Hardy-Littlewood-Sobolev inequality)\label{HLS}
Let $0<\alpha<n$ and $1<p<q<\infty$ with $1/q=1/p-\alpha/n$. If $1\leq r\leq \infty$, then
\begin{equation*}
\|I_\alpha \ast f\|_{L^{q,r}} \leq C \|f\|_{L^{p,r}}.
\end{equation*}
\end{lem}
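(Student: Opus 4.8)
The statement to be established is the Lorentz-space version of the Hardy--Littlewood--Sobolev inequality. I would deduce it from the classical ($L^p$-based) inequality by real interpolation, and I will also indicate a more self-contained route via O'Neil's convolution inequality.

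\emph{First route (real interpolation).} Fix $\alpha,p,q$ with $0<\alpha<n$, $1<p<q<\infty$ and $1/q=1/p-\alpha/n$; note that this forces $p<n/\alpha$. Since the set $\{t:1<t<n/\alpha\}$ is open and contains $p$, I would choose $p_0,p_1$ with $1<p_0<p<p_1<n/\alpha$ and set $1/q_i=1/p_i-\alpha/n$, so that $1<q_i<\infty$ for $i=0,1$. The classical Hardy--Littlewood--Sobolev inequality says that the linear operator $f\mapsto I_\alpha\ast f$ is bounded from $L^{p_i}(\mathbb{R}^n)$ to $L^{q_i}(\mathbb{R}^n)$, $i=0,1$. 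Picking $\theta\in(0,1)$ with $1/p=(1-\theta)/p_0+\theta/p_1$, the scaling identity then automatically gives $1/q=(1-\theta)/q_0+\theta/q_1$. Applying the real interpolation functor $(\,\cdot\,,\,\cdot\,)_{\theta,r}$, using the standard identification $(L^{p_0},L^{p_1})_{\theta,r}=L^{p,r}$ and $(L^{q_0},L^{q_1})_{\theta,r}=L^{q,r}$, valid for every $1\leq r\leq\infty$ precisely because $p_0\neq p_1$ (hence $q_0\neq q_1$), together with the fact that real interpolation sends bounded linear maps to bounded linear maps, I obtain boundedness of $I_\alpha\ast(\cdot):L^{p,r}\to L^{q,r}$, which is the claim.

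\emph{Second route (O'Neil's inequality).} Here one writes $I_\alpha=c_{n,\alpha}|\cdot|^{-(n-\alpha)}$ and observes, as already noted in this section, that $|x|^{-\gamma}\in L^{n/\gamma,\infty}$ for $0<\gamma<n$; with $\gamma=n-\alpha$ this gives $I_\alpha\in L^{\frac{n}{n-\alpha},\infty}(\mathbb{R}^n)$. O'Neil's generalized Young inequality in Lorentz spaces asserts $\|g\ast h\|_{L^{q,s}}\lesssim\|g\|_{L^{p_1,s_1}}\|h\|_{L^{p_2,s_2}}$ whenever $1<p_1,p_2<\infty$, $0<s_1,s_2,s\leq\infty$, $1+1/q=1/p_1+1/p_2$ and $1/s\leq 1/s_1+1/s_2$, the borderline choice $s_1=\infty$ being admissible for $p_1\in(1,\infty)$. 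Applying it with $g=I_\alpha\in L^{\frac{n}{n-\alpha},\infty}$ (so $p_1=\frac{n}{n-\alpha}$, $s_1=\infty$) and $h=f\in L^{p,r}$, the index relation reads $1+1/q=\frac{n-\alpha}{n}+1/p$, i.e. $1/q=1/p-\alpha/n$ as required (and $q\in(1,\infty)$ by hypothesis), and one may take $s=r$ since $1/r\leq 1/r+0$; hence $\|I_\alpha\ast f\|_{L^{q,r}}\lesssim\|I_\alpha\|_{L^{n/(n-\alpha),\infty}}\|f\|_{L^{p,r}}=C\|f\|_{L^{p,r}}$.

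Neither route conceals a genuine obstacle, the inequality being classical; the hypothesis $1<p<q<\infty$ is exactly what is needed, since it encodes both $p>1$ and $p<n/\alpha$, so that $q$ is finite and the endpoints $p_0,p_1$ can be taken strictly on either side of $p$. The only points meriting a word of care are, for the first route, the identification $(L^{p_0},L^{p_1})_{\theta,r}=L^{p,r}$ of the real interpolation spaces of a Lebesgue couple for the full range $1\leq r\leq\infty$ (a standard fact) and the observation that the relation $1/q=1/p-\alpha/n$ makes the interpolated exponents consistent on the image side; and, for the second route, invoking O'Neil's inequality in the form that permits the weak-type kernel $L^{n/(n-\alpha),\infty}$, after which the conclusion is immediate. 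I would present the interpolation argument as the main proof, noting the O'Neil argument as an alternative.
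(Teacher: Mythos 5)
The paper states this lemma as one of the ``well-known lemmas in Lorentz space'' and supplies neither a proof nor a specific citation, so there is no internal argument to compare against; what I can do is confirm that your proof is correct and complete. Your primary route — choose $p_0<p<p_1$ in $(1,n/\alpha)$, apply the classical Hardy--Littlewood--Sobolev inequality at the two endpoints, and invoke the real interpolation identity $(L^{p_0},L^{p_1})_{\theta,r}=L^{p,r}$ together with the exactness of the $(\cdot,\cdot)_{\theta,r}$ functor on bounded linear maps — is the standard derivation and is airtight; the scaling relation $1/q_i=1/p_i-\alpha/n$ guarantees that the interpolated target space is precisely $L^{q,r}$, and the hypothesis $1<p<q<\infty$ is exactly what keeps $p$ in the open interval $(1,n/\alpha)$ so that both endpoints exist. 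Your alternative via O'Neil's convolution inequality with $I_\alpha\in L^{n/(n-\alpha),\infty}$ is equally valid and is arguably better aligned with the spirit of this paper, which repeatedly exploits the membership $|x|^{-\gamma}\in L^{n/\gamma,\infty}$ and Lorentz-space H\"older; the interpolation route has the advantage of resting only on the classical $L^p$ theorem, while the O'Neil route is more self-contained within the Lorentz framework and makes the role of the weak-type kernel transparent. Either argument could serve as the missing justification for Lemma~\ref{HLS}.
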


\subsection{Sobolev-Lorentz spaces}
Now we give the definitions of the Sobolev-Lorentz spaces and present some useful properties which will be used (see also \cite{AT}).

Let $s\ge0$, $1<p<\infty$ and $1\leq q\leq\infty$.
The homogeneous Sobolev-Lorentz space ${\dot W}_{p,q}^s(\mathbb{R}^n)$ is defined as the space of functions $(-\Delta)^{s/2}f\in L^{p,q}(\mathbb{R}^n)$ equipped with the norm 
$$\|f\|_{{\dot W}_{p,q}^s(\mathbb{R}^n)}:=\big\|(-\Delta)^{s/2}f\big\|_{L^{p,q}}.$$
Also, the inhomogeneous Sobolev-Lorentz space $W_{p,q}^{s}(\mathbb{R}^n)$ is defined as the space of functions 
$(1-\Delta)^{s/2}f\in L^{p,q}(\mathbb{R}^n)$ equipped with the norm
$$\|f\|_{W^s_{p,q}}:=\|(1-\Delta)^{s/2}f\|_{L^{p,q}}\sim\|f\|_{L^{p,q}}+\|(-\Delta)^{s/2}f\|_{L^{p,q}}.$$
Here, $(-\Delta)^{s/2}$ and $(1-\Delta)^{s/2}$ are the Fourier multiplier operators defined by the multipliers $|\xi|^s$ and $(1+|\xi|^2)^{s/2}$, respectively.

The followings are useful inequalities for the Sobolev-Lorentz space that we need to prove our theorems.
\begin{lem}(Sobolev inequality, \cite[Theorem 2.4]{LR})\label{S}
Let $1<p<\infty$, $1\leq q \leq \infty$ and $0<s<n/p$ with $1/{p_1}=1/p-s/n.$
Then we have
\begin{equation}\label{s}
\|f\|_{L^{{p_1},q}} \leq C\|(-\Delta)^{s/2}f\|_{L^{p,q}}.
\end{equation}
\end{lem}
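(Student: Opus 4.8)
The plan is to deduce Lemma \ref{S} from the Hardy--Littlewood--Sobolev inequality in Lorentz spaces (Lemma \ref{HLS}) by realizing $(-\Delta)^{-s/2}$ as convolution with the Riesz potential $I_s$. First I would set $g:=(-\Delta)^{s/2}f$, which lies in $L^{p,q}$ by hypothesis, and recall that, by the definition of $I_s$ recorded in the introduction, $\widehat{I_s}(\xi)=|\xi|^{-s}$, so that $I_s$ is exactly the convolution kernel of $(-\Delta)^{-s/2}$. Consequently, for $f$ nice enough (say Schwartz, or more generally any $f$ for which the right-hand side is finite, interpreted in $\mathcal S'$ modulo polynomials) one has the representation $f=(-\Delta)^{-s/2}g=I_s\ast g$.

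Next I would simply invoke Lemma \ref{HLS} with Riesz parameter $s$, input exponent $p$, output exponent $p_1$ determined by $1/p_1=1/p-s/n$, and common secondary Lorentz exponent $q$. The hypotheses of Lemma \ref{HLS} hold here: $1<p<\infty$, $0<s<n$ (which follows from $0<s<n/p$ together with $p>1$), and $p<p_1<\infty$ (since $0<s<n/p$). This yields $\|f\|_{L^{p_1,q}}=\|I_s\ast g\|_{L^{p_1,q}}\lesssim\|g\|_{L^{p,q}}=\|(-\Delta)^{s/2}f\|_{L^{p,q}}$, which is the desired inequality. As an alternative to invoking Lemma \ref{HLS}, one can argue directly from O'Neil's convolution inequality in Lorentz spaces, using that the kernel $|x|^{s-n}$ of $I_s$ belongs to the weak-type space $L^{n/(n-s),\infty}$ and that the Lorentz exponents match: $1+1/p_1=1/p+(n-s)/n$ for the primary indices, while $1/q\le 1/q+0$ for the secondary indices.

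The main obstacle---indeed essentially the only nontrivial point---is the reduction step, namely carefully justifying the identity $f=I_s\ast g$ in the correct functional-analytic framework (the homogeneous Sobolev--Lorentz spaces, which are defined modulo polynomials, together with the mapping properties of Riesz potentials on tempered distributions), and observing that the polynomial ambiguity is harmless because $I_s\ast g$ lies in $L^{p_1,q}$ with $p_1<\infty$, a space that contains no nonzero polynomial. A routine density argument then upgrades the estimate from Schwartz functions to all admissible $f$. Since all of this is standard, in the write-up I would either spell it out briefly or simply refer to \cite[Theorem 2.4]{LR}, from which the statement is quoted.
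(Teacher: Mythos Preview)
Your argument is correct and is the standard route to this inequality: represent $f=I_s\ast((-\Delta)^{s/2}f)$ and apply Lemma~\ref{HLS} (or O'Neil's inequality) with the matching exponent relation $1/p_1=1/p-s/n$. Note, however, that the paper does not supply its own proof of Lemma~\ref{S}; it simply quotes the result from \cite[Theorem~2.4]{LR}, so there is nothing to compare against beyond observing that your derivation is exactly the kind of argument underlying that reference.
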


\begin{lem}(Fractional Leibniz rule, \cite[Theorem 6.1]{CUN})\label{FLR}
Let $s\ge0$ and $1<p,p_i<\infty$, $1\leq q,q_i\leq\infty$, $i=1,2,3,4$.
Then we have
\begin{equation*}
\|(-\Delta)^{s/2}(fg)\|_{L^{p,q}} \lesssim \|(-\Delta)^{s/2}f\|_{L^{p_1,q_1}}\|g\|_{L^{p_2,q_2}}+\|f\|_{L^{p_3,q_3}}\|(-\Delta)^{s/2}g\|_{L^{p_4,q_4}}
\end{equation*}
if 
$$\frac1p=\frac1{p_1}+\frac1{p_2}=\frac{1}{p_3}+\frac{1}{p_4}\quad \text{and} \quad \frac{1}{q}=\frac1{q_1}+\frac{1}{q_2}=\frac{1}{q_3}+\frac{1}{q_4}.$$
\end{lem}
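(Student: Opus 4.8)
The plan is to deduce the inequality from the classical Kato--Ponce fractional Leibniz rule by means of the Littlewood--Paley calculus, using that every Lorentz space $L^{p,q}$ with $1<p<\infty$, $1\le q\le\infty$ is a real interpolation space between two Lebesgue spaces. Consequently the operators that drive the classical proof, namely the Littlewood--Paley square function, the Hardy--Littlewood maximal operator $M$, and its Fefferman--Stein vector-valued extension, are all bounded on $L^{p,q}$, while Lemma \ref{H} provides the matching H\"older inequality with precisely the exponent relations $1/p=1/p_1+1/p_2=1/p_3+1/p_4$ and $1/q=1/q_1+1/q_2=1/q_3+1/q_4$ assumed here.

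First I would fix a homogeneous Littlewood--Paley decomposition $\mathrm{Id}=\sum_{j\in\mathbb Z}P_j$ with $\widehat{P_jh}$ supported in $\{|\xi|\sim 2^j\}$, write $S_j=\sum_{k\le j}P_k$ and $\widetilde{P}_j=\sum_{|k-j|\le2}P_k$, and split by Bony's paraproduct decomposition
$$fg=\sum_{j}(P_jf)(S_{j-2}g)+\sum_{k}(S_{k-2}f)(P_kg)+\sum_{|j-k|\le2}(P_jf)(P_kg)=:\Pi_1+\Pi_2+\Pi_3.$$
In $\Pi_1$ each term $(P_jf)(S_{j-2}g)$ has Fourier support in an annulus $\{|\xi|\sim 2^j\}$, so $(-\Delta)^{s/2}\Pi_1$ is frequency-localized summand by summand; the Littlewood--Paley characterization of the $\dot W_{p,q}^s$-norm and the pointwise bound $\sup_k|S_kg|\lesssim Mg$ then give
$$\|(-\Delta)^{s/2}\Pi_1\|_{L^{p,q}}\lesssim\Big\|\Big(\sum_j 2^{2js}|P_jf|^2\Big)^{1/2}Mg\Big\|_{L^{p,q}}.$$
Applying H\"older in Lorentz spaces (Lemma \ref{H}), the boundedness of $M$ on $L^{p_2,q_2}$, and the square-function equivalence $\|(\sum_j 2^{2js}|P_jf|^2)^{1/2}\|_{L^{p_1,q_1}}\sim\|(-\Delta)^{s/2}f\|_{L^{p_1,q_1}}$ yields the first term on the right-hand side of the lemma; the symmetric computation on $\Pi_2$ yields the second.

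The high--high interaction $\Pi_3=\sum_j(P_jf)(\widetilde{P}_jg)$ is the delicate one, since $(P_jf)(\widetilde{P}_jg)$ is spectrally supported only in a ball $\{|\xi|\lesssim 2^j\}$ and not in an annulus. For $s>0$ I would therefore localize after the fact: projecting $\Pi_3$ by $P_l$ kills all scales with $2^j\ll2^l$, and the bound $\|P_l(-\Delta)^{s/2}h\|_{L^{p,q}}\lesssim 2^{ls}\|h\|_{L^{p,q}}$ combined with the convergent geometric summation $\sum_{l\le j}2^{-(j-l)s}<\infty$ lets me trade $2^{ls}$ for $2^{js}$; a Fefferman--Stein vector-valued maximal estimate together with H\"older and the square-function equivalence then bound $\|(-\Delta)^{s/2}\Pi_3\|_{L^{p,q}}$ by $\|(-\Delta)^{s/2}f\|_{L^{p_1,q_1}}\|g\|_{L^{p_2,q_2}}$ (one may as well move the derivative onto $g$). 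For $s=0$ this term is nothing but the H\"older bound $\|\Pi_3\|_{L^{p,q}}\lesssim\|f\|_{L^{p_1,q_1}}\|g\|_{L^{p_2,q_2}}$. Adding the three contributions gives the claim.

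The main obstacle is not a PDE issue but the transfer of this harmonic-analytic toolkit to the Lorentz scale: one has to know that the Littlewood--Paley square function, the operator $M$, and the Fefferman--Stein inequality $\|(\sum_j|Mf_j|^2)^{1/2}\|_{L^{p,q}}\lesssim\|(\sum_j|f_j|^2)^{1/2}\|_{L^{p,q}}$ are all valid for $1<p<\infty$, $1\le q\le\infty$. Each of these follows from the corresponding $L^p$ statement by real interpolation via $(L^{p_0},L^{p_1})_{\theta,q}=L^{p,q}$ and Marcinkiewicz-type interpolation on Lorentz spaces; the endpoint $q=\infty$, where $L^{p,\infty}$ is only quasi-normed, is handled through the equivalent Banach norm $|||\cdot|||_{L^{p,\infty}}$ built from $f^{**}$ recalled in Section \ref{se2}. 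Granting these facts, the scheme above is in essence the classical Kato--Ponce argument, and the hypothesis $s\ge0$ enters exactly in controlling the high--high term $\Pi_3$.
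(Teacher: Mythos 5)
The paper does not prove this lemma; it is imported verbatim from Cruz--Uribe and Naibo as Theorem~6.1 of \cite{CUN}, so there is no internal proof to compare against. Your sketch is a correct outline of a direct Littlewood--Paley/Bony paraproduct proof, and all the ingredients you invoke (boundedness of the Hardy--Littlewood maximal function and of the Fefferman--Stein vector-valued maximal function, and the square-function characterization of $\dot W^s_{p,q}$) do hold on $L^{p,q}$ for $1<p<\infty$, $1\le q\le\infty$ by real interpolation of the corresponding $L^p$ statements, with the $q=\infty$ endpoint handled via the equivalent $f^{**}$ norm as you note. One minor imprecision: in the low--high piece $\Pi_1$, after pulling out the factor $2^{js}$ from $(-\Delta)^{s/2}$ acting on the $j$-th annulus, you are left with $M\big[(P_jf)(S_{j-2}g)\big]$ inside the square sum, so a Fefferman--Stein step is also needed there, not only for $\Pi_3$; this is cosmetic and does not affect the argument. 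Your route is, however, genuinely different from the one followed in \cite{CUN}: Cruz--Uribe and Naibo first establish the Kato--Ponce inequality on weighted Lebesgue spaces with Muckenhoupt $A_p$ weights and then transfer it to a wide class of Banach function spaces (of which the Lorentz spaces are one instance) by Rubio de Francia extrapolation. Their method yields a much broader statement in one stroke at the cost of developing the weighted theory first; your argument is self-contained on the Lorentz scale, avoids extrapolation entirely, and only needs the classical Calder\'on--Zygmund tools carried to $L^{p,q}$ by interpolation.
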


\begin{lem}(Fractional chain rule, \cite[Lemma 2.4]{AT})\label{FCR}
Let $0\leq s\leq 1$ and $F\in C^1(\mathbb{C},\mathbb{C})$.
Then, for $1<p,p_1,p_2<\infty$, $1\leq q,q_1,q_2<\infty$,
we have 
\begin{equation*}
\|(-\Delta)^{s/2}F(f)\|_{L^{p,q}} \leq C \|F'(f)\|_{L^{p_1 ,q_1}} \|(-\Delta)^{s/2}f\|_{L^{p_2,q_2}}
\end{equation*}
if $$\frac1p=\frac{1}{p_1}+\frac1{p_2} \quad\text{and}\quad \frac1q=\frac{1}{q_1}+\frac{1}{q_2}.$$
\end{lem}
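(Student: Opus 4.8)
The plan is to transplant the classical Christ--Weinstein proof of the fractional chain rule on $L^p$ to the Lorentz scale, the only genuinely new inputs being H\"older's inequality in Lorentz spaces (Lemma~\ref{H}), the boundedness of the Hardy--Littlewood maximal operator on $L^{p,q}$, and the power identity $\||h|^{r}\|_{L^{a,c}}=\|h\|_{L^{ar,cr}}^{r}$ recalled in Section~\ref{se2}. First I would dispose of the endpoints. The case $s=0$ reduces to a pointwise bound on $F(f)$ combined with Lemma~\ref{H} (and one may replace $F$ by $F-F(0)$ freely, since $(-\Delta)^0$ is the identity). For $s=1$, write $(-\Delta)^{1/2}(F(f))$ as a finite linear combination of $R_j\partial_j(F(f))$ with $R_j$ the Riesz transforms; each $R_j$ is a Calder\'on--Zygmund operator, hence bounded on $L^{p,q}$ for $1<p<\infty$, $1\le q\le\infty$ by Marcinkiewicz interpolation, so that, using the ordinary chain rule $\partial_j(F(f))=F'(f)\,\partial_j f$ (first for $f$ in a dense subclass, then by approximation), Lemma~\ref{H}, and $\partial_j f=c\,R_j(-\Delta)^{1/2}f$, one obtains $\|(-\Delta)^{1/2}(F(f))\|_{L^{p,q}}\lesssim\|F'(f)\|_{L^{p_1,q_1}}\|(-\Delta)^{1/2}f\|_{L^{p_2,q_2}}$. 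It remains to treat $0<s<1$.

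For $0<s<1$ the heart of the matter is the pointwise estimate
$$\big|(-\Delta)^{s/2}(F(f))(x)\big|\ \lesssim\ \mathcal{M}_\sigma\!\big[F'(f)\big](x)\cdot\mathcal{M}_\sigma\!\big[(-\Delta)^{s/2}f\big](x),\qquad \mathcal{M}_\sigma h:=\big(M|h|^{\sigma}\big)^{1/\sigma},$$
valid for any fixed $\sigma$ with $1<\sigma<\min\{p_1,p_2\}$ (a nonempty range since $p_1,p_2>1$), $M$ being the Hardy--Littlewood maximal operator. This is proved exactly as in the $L^p$ theory: start from the difference representation $(-\Delta)^{s/2}g(x)=c_{n,s}\,\mathrm{p.v.}\!\int_{\mathbb{R}^n}\frac{g(x)-g(x-y)}{|y|^{n+s}}\,dy$ with $g=F(f)$; use the fundamental theorem of calculus to express $F(f(x))-F(f(x-y))$ through $F'$ along the segment joining $f(x-y)$ and $f(x)$ together with $f(x)-f(x-y)$; split the $y$-integral at a radius $\rho=\rho(x)$, retaining the principal-value cancellation on the inner scales; estimate $|f(x)-f(x-y)|\lesssim|y|^{s}\,\mathcal{M}_\sigma[(-\Delta)^{s/2}f](x)$ by means of the Riesz-potential representation $f=I_s\ast(-\Delta)^{s/2}f$ and a dyadic decomposition of the kernel difference; and optimize over $\rho$. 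Since this bound is purely pointwise, it transfers to the present setting without change; $0<s<1$ is used precisely to make the two pieces of the split converge.

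With the pointwise estimate in hand, Lemma~\ref{H} gives $\|(-\Delta)^{s/2}(F(f))\|_{L^{p,q}}\lesssim\|\mathcal{M}_\sigma[F'(f)]\|_{L^{p_1,q_1}}\,\|\mathcal{M}_\sigma[(-\Delta)^{s/2}f]\|_{L^{p_2,q_2}}$. For each factor, by the power identity applied twice and the boundedness of $M$ on $L^{u,v}$ for $1<u<\infty$ and $0<v\le\infty$ (a consequence of its $L^u$-boundedness via Marcinkiewicz interpolation), one has $\|\mathcal{M}_\sigma[h]\|_{L^{a,c}}=\|M|h|^{\sigma}\|_{L^{a/\sigma,c/\sigma}}^{1/\sigma}\lesssim\||h|^{\sigma}\|_{L^{a/\sigma,c/\sigma}}^{1/\sigma}=\|h\|_{L^{a,c}}$, the maximal bound being legitimate because $a/\sigma>1$ by the choice of $\sigma$. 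Taking $h=F'(f)$ with $(a,c)=(p_1,q_1)$ and $h=(-\Delta)^{s/2}f$ with $(a,c)=(p_2,q_2)$ yields the asserted inequality; the restriction $q,q_i<\infty$ only serves to keep the auxiliary exponents finite where convenient. As usual, the pointwise identities are first established for $f$ in a dense subclass and then extended by Fatou's lemma and the a priori bound just obtained.

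The main obstacle is the pointwise estimate for $0<s<1$, and within it the control of the factor $\int_0^1\!\big|F'\big(f(x-y)+\theta(f(x)-f(x-y))\big)\big|\,d\theta$: the integrand evaluates $F'$ at points of a segment in $\mathbb{C}$ that need not meet the range of $f$, so it cannot simply be replaced by an average of $F'(f)$. This is the delicate step of the Christ--Weinstein argument; it is handled by first reducing to bounded $F'$ and smooth, boundedly supported $f$ (so that all principal values are classical), exploiting on the inner region $|y|<\rho$ that this segment stays within distance $|y|^{s}\mathcal{M}_\sigma[(-\Delta)^{s/2}f](x)$ of $f(x-y)$, and passing to the general case by Fatou's lemma. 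Everything specific to the Lorentz scale---Lemma~\ref{H}, the maximal-function bound on $L^{p,q}$, and the power identity---is routine.
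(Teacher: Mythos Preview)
The paper does not prove this lemma at all: it is simply quoted from \cite[Lemma~2.4]{AT} and used as a black box, so there is no ``paper's own proof'' to compare against. What remains is to assess your sketch on its own merits.

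Your overall architecture---reduce to a pointwise inequality, then invoke Lorentz-space H\"older (Lemma~\ref{H}) and the Lorentz boundedness of the maximal function---is sound and is exactly how one transports such results from $L^p$ to $L^{p,q}$. The endpoint $s=1$ via Riesz transforms is fine; for $s=0$ note that the inequality as stated requires $F(0)=0$ (otherwise $F(f)$ need not lie in any $L^{p,q}$), which is harmless in applications but worth stating. The real issue is the one you yourself flag as ``the main obstacle'': from the singular-integral representation you obtain $\int_0^1 F'\big(f(x-y)+\theta(f(x)-f(x-y))\big)\,d\theta$, and you want to dominate this by $\mathcal{M}_\sigma[F'(f)](x)$. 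Your proposed fix---reduce to bounded $F'$, use that the segment stays close to $f(x-y)$---does not close: bounding by $\|F'\|_{L^\infty}$ gives the wrong right-hand side, and ``staying close to $f(x-y)$'' yields nothing quantitative for a merely $C^1$ function $F$. The Christ--Weinstein proof does \emph{not} proceed via the hypersingular integral; it uses the Littlewood--Paley telescoping $F(u)=\sum_j\big(F(S_ju)-F(S_{j-1}u)\big)$, writes each term as $\big(\int_0^1 F'(S_{j-1}u+t\Delta_j u)\,dt\big)\Delta_j u$, and exploits that $S_{j-1}u+t\Delta_j u$ is a genuine convolution average of $u$ to dominate the $F'$ factor by $M(F'(u))$. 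That mechanism is what produces $\mathcal{M}[F'(f)]$ rather than $\|F'\|_{L^\infty}$, and it is absent from your sketch. If you rewrite the pointwise step in Littlewood--Paley form (the resulting square-function bound is again pointwise, so your Lorentz reduction still applies), the argument goes through; as written, the inner-region estimate is a gap.
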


\subsection{Strichartz estimates}
Finally, we recall the Strichartz estimates in Lorentz spaces due to Keel and Tao \cite[Theorem 10.1]{KT}.
As we shall see, the availability of these estimates is the key ingredient in the proof of Theorem \ref{Hloc}.
\begin{prop}[\cite{KT}]\label{KT}\label{St}
Let $n\ge3.$  
Assume that $(q,r)$ and $(\tilde q , \tilde r)$ are admissible pairs, i.e.,
\begin{equation*}
\frac{2}{q}+\frac{n}{r}=\frac{n}{2}, \quad 2\leq r \leq \frac{2n}{n-2}, \quad \frac{2}{\tilde q}+\frac{n}{\tilde r}=\frac{n}{2}, \quad 2\leq \tilde r \leq \frac{2n}{n-2}.
\end{equation*}
Then we have
\begin{equation}\label{homo}
\|e^{it\Delta}f\|_{L_t^q(\mathbb{R};L_x^{r,2})}\leq C \|f\|_{L^2},
\end{equation}
\begin{equation}\label{du}
\Big\|\int_{\mathbb{R}}e^{-i\tau\Delta}F(\cdot,\tau)d\tau\Big\|_{L^2}\leq C \|F\|_{L_t^{\tilde q'}(\mathbb{R}; L_x^{\tilde r',2})}
\end{equation}
and 
\begin{equation}\label{inho}
\Big\|\int_0^t e^{i(t-\tau)\Delta} F(\cdot,\tau)d\tau\Big\|_{L_t^q(\mathbb{R};L_x^{r,2})} \leq C \|f\|_{L_t^{\tilde q'}(\mathbb{R};L_x^{{\tilde r}',2})}.
\end{equation}
\end{prop}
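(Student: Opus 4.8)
\section{Proof of the Strichartz estimates (Proposition \ref{St})}

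The plan is to deduce the three inequalities from the abstract Strichartz machinery of Keel and Tao \cite{KT}, whose hypotheses the free Schr\"odinger group $U(t)=e^{it\Delta}$ satisfies. First I would record the only two structural inputs that are needed: the energy identity $\|e^{it\Delta}f\|_{L^2}=\|f\|_{L^2}$, coming from unitarity of $U(t)$ on $L^2$, and the dispersive bound $\|e^{it\Delta}f\|_{L^\infty}\le C|t|^{-n/2}\|f\|_{L^1}$, read off from the explicit convolution kernel $(4\pi i t)^{-n/2}e^{i|x|^2/4t}$. Interpolating these gives the decay $\|e^{i(t-s)\Delta}g\|_{L^{r}}\lesssim |t-s|^{-n(\frac12-\frac1r)}\|g\|_{L^{r'}}$ for $2\le r\le\infty$; nothing else about the equation enters.

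Next I would match this to the abstract framework with decay exponent $\sigma=n/2$. A pair $(q,r)$ is $\sigma$-admissible in the Keel--Tao sense, i.e.\ $q,r\ge2$, $(q,r,\sigma)\neq(2,\infty,1)$ and $\frac1q+\frac{\sigma}{r}=\frac{\sigma}{2}$, precisely when $\frac2q+\frac nr=\frac n2$ and $2\le r\le\frac{2n}{n-2}$, which is the admissibility used throughout this paper; since $n\ge3$ the forbidden triple $(2,\infty,1)$ never occurs, so the full range including the endpoint $q=2$ (equivalently $r=\frac{2n}{n-2}$) is permitted. Then \eqref{homo} is the homogeneous estimate produced by the theorem, \eqref{du} is its $TT^\ast$-dual, and \eqref{inho} is the corresponding retarded (Christ--Kiselev-free) estimate for the Duhamel operator, which the theorem delivers for two possibly distinct admissible pairs on the two sides.

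For the Lorentz refinement I would follow the interpolation step of the Keel--Tao proof directly: write the bilinear form $\iint \langle U(s)^\ast F(s),\,U(t)^\ast G(t)\rangle\,ds\,dt$, decompose it dyadically according to $|t-s|\sim 2^{j}$, and estimate each dyadic piece by real interpolation between the energy bound (which acts on $L^{2}_x=L^{2,2}_x$) and the dispersive bound ($L^{1}_x\to L^{\infty}_x$). Performing this interpolation in the Lorentz scale with second exponent $2$ produces the spatial norms $L^{r,2}_x$ rather than $L^{r}_x$; summing over $j$ (the step whose convergence forces $q\ge2$) yields \eqref{homo}--\eqref{inho} as stated. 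Since $L^{r,2}\hookrightarrow L^{r,r}=L^{r}$ for $r\ge2$, these Lorentz estimates are strictly stronger than the classical Strichartz estimates, and it is exactly this extra room in the spatial exponent that the nonlinear analysis of Section~\ref{se3} will exploit.

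The one point requiring care, and the one I would flag as the main technical obstacle, is the endpoint $q=2$: there the dyadic summation is only logarithmically divergent at the naive level, so a crude bound via the Hardy--Littlewood--Sobolev inequality (Lemma~\ref{HLS}) applied in time does not suffice, and one genuinely needs the Keel--Tao bilinear interpolation argument, which trades the off-diagonal decay $|t-s|^{-n/2}$ against the Lorentz-space interpolation to recover summability. Away from the endpoint the estimates also follow from the classical $TT^\ast$ argument combined with Lemma~\ref{HLS}, but since Theorem~\ref{Hloc} admits pairs all the way up to $r=\tfrac{2n}{n-2}$ the endpoint case is indispensable, so I would simply invoke \cite[Theorem~10.1]{KT} in full strength.
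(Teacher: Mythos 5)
Your proposal is correct and takes the same approach as the paper, which simply cites \cite[Theorem~10.1]{KT} without reproducing an argument; your sketch of the underlying Keel--Tao machinery (unitarity giving the energy bound, the explicit kernel giving the dispersive bound, $\sigma$-admissibility with $\sigma=n/2$ matching the paper's admissibility conditions with no forbidden triple since $n\geq 3$, and the bilinear dyadic decomposition with real interpolation that produces the $L^{r,2}_x$ refinement and handles the endpoint $q=2$) is a faithful account of that reference. The only minor imprecision is the parenthetical description of \eqref{inho} as ``Christ--Kiselev-free'': Keel--Tao's own proof of the retarded inequality does invoke the Christ--Kiselev lemma away from the endpoint together with a separate argument at $q=2$ or $\tilde q=2$, though from the user's side their theorem delivers \eqref{inho} directly without further work.
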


\section{Nonlinear estimates}\label{se3}
In this section we obtain some nonlinear estimates for the nonlinearity 
$F(u)=\lambda (I_\alpha\ast|\cdot|^{-b}|u|^p)|x|^{-b}|u|^{p-2}u$
of \eqref{HE} using the space involved in the Strichartz estimates (Proposition \ref{St}).
These nonlinear estimates will play an important role in the next section when proving the well-posedness results applying the contraction mapping principle.

\begin{prop}\label{prop}
Let $n\ge3$ and $0\leq s\le 1$. 
Assume that 
\begin{equation*}
\max\Big\{\frac{n-2}{3},n-4\Big\}<\alpha<n 
\end{equation*}
and
\begin{equation*}
\max\Big\{0, \frac{\alpha}{2}+1-\frac{(n-2s)(n+4+\sqrt{9n^2-8n+16})}{8(n-2)}\Big\}<b\leq \frac{\alpha}{2}+1-\frac{n-2s}{2}.
\end{equation*}
If the exponents $q,r$ satisfy all the conditions given as in Theorem \ref{Hloc}, then there exist certain admissible pair $(\tilde q , \tilde r)$ for which
\begin{equation}\label{non}
\big\|F(u)-F(v)\big\|_{L_t^{\tilde{q}'}(I ; L_{x}^{\tilde{r}',2})}
\leq C\big(\|u\|^{2p-2}_{L_t^{q}(I;{\dot W}^s_{r,2})}+\|v\|^{2p-2}_{L_t^{q}(I;{\dot W}^s_{r,2})}\big) \|u-v\|_{L_t^{q}(I; L_{x}^{r,2})}
\end{equation}
and 
\begin{equation}\label{non2}
\|F(u)\|_{L_t^{\tilde{q}'}(I ; {\dot W}^s_{\tilde{r}',2})}\leq C\|u\|^{2p-1}_{L_t^{q}(I;{\dot W}^s_{r,2})} 
\end{equation}
hold for $p=1+\frac{2-2b+\alpha}{n-2s}$. Here, $I=[0,T]$ denotes a finite time interval. 
\end{prop}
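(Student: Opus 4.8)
The plan is to estimate the nonlinearity $F(u)=\lambda(I_\alpha\ast|\cdot|^{-b}|u|^p)|x|^{-b}|u|^{p-2}u$ by first handling the difference estimate \eqref{non} at the $L^2$-Lorentz level and then upgrading to the $\dot W^s$ estimate \eqref{non2} via the fractional calculus lemmas. Since $F$ is a sum of terms each of which is multilinear of total degree $2p-1$ in $u$ (and its conjugate), I would use the pointwise bound
\begin{equation*}
|F(u)-F(v)|\lesssim \big(I_\alpha\ast|\cdot|^{-b}(|u|+|v|)^p\big)|x|^{-b}(|u|+|v|)^{p-2}|u-v| + \big(I_\alpha\ast|\cdot|^{-b}(|u|+|v|)^{p-1}|u-v|\big)|x|^{-b}(|u|+|v|)^{p-1},
\end{equation*}
so that it suffices to bound a generic term of the form $(I_\alpha\ast|\cdot|^{-b}w_1\cdots w_p)|x|^{-b}w_{p+1}\cdots w_{2p-1}$ in $L_x^{\tilde r',2}$, where each $w_j$ is one of the functions $u,v,u-v$. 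The key device is that $|x|^{-b}\in L^{n/b,\infty}$; combining this with Hölder in Lorentz spaces (Lemma \ref{H}) to gather the inner product $|\cdot|^{-b}w_1\cdots w_p$ into a single Lorentz space, then applying the Hardy–Littlewood–Sobolev inequality (Lemma \ref{HLS}) to the Riesz potential, and finally Hölder again against the outer factor $|x|^{-b}w_{p+1}\cdots w_{2p-1}$, reduces everything to a product of $L_x^{r,2}$ norms of the $w_j$'s — provided the exponents match. The secondary indices are harmless: each $w_j$ sits in $L^{r,2}$, $|x|^{-b}$ sits in $L^{n/b,\infty}$, and the Lorentz-Hölder secondary exponents add up as $\tfrac1{\infty}+\tfrac1{\infty}+(2p-1)\cdot\tfrac12$, which is at least $1$ as soon as $p\ge1$, so one can always close the secondary index (using \eqref{emb} to relax $2$ up to whatever is needed). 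The bookkeeping for the primary (integrability) exponents is where the hypotheses \eqref{as-1}, \eqref{as-2}, \eqref{as-r} enter: one needs $\tfrac{n}{r}$ to lie in the stated window so that both applications of Hölder have admissible target exponents strictly between $1$ and $\infty$, so that the intermediate HLS exponent is in range, and so that $\tilde r$ is forced to be a genuine admissible pair (i.e. $2\le\tilde r\le\tfrac{2n}{n-2}$). I would record the exact exponent identities — something like $\tfrac{1}{\tilde r'} = \tfrac{2b}{n} - \tfrac{\alpha}{n} + \tfrac{2p-1}{r}$ after combining HLS with the two Hölders — and then verify, purely arithmetically, that \eqref{as-r} is equivalent to $(\tilde q,\tilde r)$ being admissible plus all intermediate exponents being legal; the temporal exponent $\tilde q'$ is then determined by $\tfrac{2}{\tilde q}+\tfrac{n}{\tilde r}=\tfrac{n}{2}$, and on the bounded interval $I$ one uses Hölder in $t$ together with the relation between $q$, $\tilde q'$ and the scaling to absorb a power of $|I|$ if necessary (or, for the global/scattering statement, to check the exponents already match without any time factor).

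For the $\dot W^s$ estimate \eqref{non2}, I would apply the fractional Leibniz rule (Lemma \ref{FLR}) to distribute $(-\Delta)^{s/2}$ across the product $\big(I_\alpha\ast|\cdot|^{-b}|u|^p\big)\cdot\big(|x|^{-b}|u|^{p-2}u\big)$, obtaining two groups of terms: one where the derivative lands on the Riesz-potential factor and one where it lands on the outer factor $|x|^{-b}|u|^{p-2}u$. In the first group, $(-\Delta)^{s/2}$ commutes with $I_\alpha\ast$, so I need $(-\Delta)^{s/2}(|\cdot|^{-b}|u|^p)$; by another Leibniz step this splits into a piece with $(-\Delta)^{s/2}|x|^{-b}$ — which, since $(-\Delta)^{s/2}|x|^{-b}$ is a constant times $|x|^{-b-s}\in L^{n/(b+s),\infty}$, just shifts the integrability exponent by $s/n$ — and a piece with $(-\Delta)^{s/2}|u|^p$, which the fractional chain rule (Lemma \ref{FCR}) with $F(z)=|z|^p$ bounds by $\||u|^{p-1}\|_{L^{\cdot,\cdot}}\|(-\Delta)^{s/2}u\|_{L^{r,2}}$. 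The outer factor $|x|^{-b}|u|^{p-2}u$ is treated identically. The net effect is that each of the $2p-1$ copies of $u$ contributes an $L^{r,2}$ norm except that exactly one copy carries the derivative and contributes $\|(-\Delta)^{s/2}u\|_{L^{r,2}}$, so after summing one gets $\lesssim \|u\|_{L^{r,2}}^{2p-2}\|(-\Delta)^{s/2}u\|_{L^{r,2}}\le\|u\|_{\dot W^s_{r,2}}^{2p-1}$ pointwise in $t$, and then Hölder in $t$ over $I$ closes it exactly as in the previous paragraph. (The Sobolev embedding of Lemma \ref{S} is used to convert a leftover $L^{p_1,2}$ norm of $u$ coming out of the chain rule back into a derivative norm — this is what lets all the exponents be fed consistently into the $L^q_t\dot W^s_{r,2}$ norm on the right-hand side, and it is the reason one wants $\tfrac{n}{r}>s$ in \eqref{as-r}.) The same decomposition with $u,v$ replaced by mixed arguments handles the difference version of \eqref{non2}, though since the proposition only asks for the difference estimate \eqref{non} at the plain $L^{r,2}$ level, that complication does not arise here.

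The main obstacle is the exponent bookkeeping: one must choose the splitting exponents in the two Hölder applications and the two Leibniz/chain-rule applications so that (a) every primary exponent appearing is strictly between $1$ and $\infty$ (the Lorentz Hölder, HLS, Sobolev, and fractional-calculus lemmas all require open-interval exponents — the endpoints $1$ and $\infty$ are genuinely excluded), (b) the HLS shift by $\alpha/n$ is legal, i.e. the input exponent exceeds the output exponent, (c) the resulting $\tilde r$ satisfies $2\le\tilde r\le\tfrac{2n}{n-2}$ so that $(\tilde q,\tilde r)$ is admissible and the dual Strichartz estimate \eqref{du}–\eqref{inho} applies, and (d) the arithmetic identity relating $\tilde r'$, $b$, $\alpha$, $p$, $r$ actually holds for the critical power $p=1+\tfrac{2-2b+\alpha}{n-2s}$. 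Showing that the region \eqref{as-r} in the $\tfrac{n}{r}$-line is nonempty under \eqref{as-1}–\eqref{as-2} is itself a nontrivial inequality chase — this is precisely where the peculiar bound involving $\sqrt{9n^2-8n+16}$ comes from (it is the discriminant condition guaranteeing that the lower endpoint $\tfrac{n}{2}-\tfrac{2}{2p-1}$ stays below the upper endpoint $\tfrac{s(p-1)-b+n}{p}$), and I would isolate this as a separate elementary lemma rather than weaving it into the functional-analytic estimates. Once the exponents are pinned down, the functional-analytic content is a routine, if lengthy, concatenation of Lemmas \ref{H}, \ref{HLS}, \ref{S}, \ref{FLR}, \ref{FCR}.
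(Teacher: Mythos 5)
Your proposal follows essentially the same route as the paper: the same pointwise decomposition of $F(u)-F(v)$, the same chain of Lorentz--H\"older, Hardy--Littlewood--Sobolev and Sobolev embedding to land in $L^{\tilde r',2}$ with the identity $\tfrac{1}{\tilde r'}=\tfrac{2p-1}{r}+\tfrac{2b-\alpha-2s(p-1)}{n}$, the fractional Leibniz and chain rules for the $\dot W^s$ estimate, and a final exponent chase to recover the hypotheses \eqref{as-1}--\eqref{as-2}. The one place you deviate is in handling the term where $(-\Delta)^{s/2}$ hits the Riesz-potential factor: you commute $(-\Delta)^{s/2}$ past $I_\alpha\ast$ and then run a second Leibniz/chain-rule pass on $(-\Delta)^{s/2}\big(|\cdot|^{-b}|u|^p\big)$, whereas the paper simply absorbs the derivative into the kernel, writing $(-\Delta)^{s/2}\big(I_\alpha\ast g\big)=c\,|x|^{\alpha-s-n}\ast g$, and applies HLS at the shifted exponent $\alpha-s$. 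Both lead to the same final identity for $1/\tilde r'$, but the paper's version is lighter: it needs no second Leibniz/chain split under the convolution, at the modest cost of the extra requirement $s<\alpha$ (which is subsumed by the derived conditions); your route avoids $s<\alpha$ but introduces the slightly tighter intermediate bound $\tfrac{n}{r}<s+\tfrac{n-b-s}{p}$ from the $|x|^{-b-s}|u|^p$ piece, so you would need to check that this does not shrink the admissible window for $r$ beyond \eqref{as-r}. Also, your hedge about ``absorbing a power of $|I|$'' is unnecessary and should be dropped: the critical relation $p=1+\tfrac{2-2b+\alpha}{n-2s}$ forces the temporal H\"older to be exact, $\tfrac{1}{\tilde q'}=\tfrac{2p-1}{q}$, with no $|I|$ factor, which is in fact essential for the global/scattering application.
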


\begin{proof}[Proof]
Let $0< s \leq 1$.
The case $s=0$ is more simply handled by replacing Sobolev type inequalities with H\"older's inequality in the proof and so we shall omit the details. We first consider the admissible pairs $(q,r)$ and $(\tilde q,\tilde r)$ as
\begin{equation}\label{2c1}
\frac{n-2}{2}\leq \frac{n}{r} \leq \frac{n}{2}, \quad \frac{2}{q}+\frac{n}{r}=\frac{n}{2}, \quad \frac{n-2}{2}\leq \frac{n}{\tilde r} \leq \frac{n}{2}, \quad \frac{2}{\tilde q}+\frac{n}{\tilde r}=\frac{n}{2}.
\end{equation}
\subsection{Estimating \eqref{non}}
By the following simple inequality 
\begin{align*}
|F(u)-F(v)| &= \Big||x|^{-b}|u|^{p-2}u(I_\alpha\ast|x|^{-b}|u|^p)-|x|^{-b}|v|^{p-2}v(I_\alpha\ast|x|^{-b}|v|^p)|\Big|\\
&=\Big||x|^{-b}(|u|^{p-2}u-|v|^{p-2}v)(I_\alpha\ast|x|^{-b}|u|^p)\\
&\qquad\qquad\qquad\qquad\qquad + |x|^{-b}|v|^{p-2}v\big(I_\alpha \ast|x|^{-b}(|u|^p-|v|^p)\big)\Big|\\
&\leq C \Big||x|^{-b}(|u|^{p-2}+|v|^{p-2})|u-v||I_\alpha\ast|x|^{-b}|u|^p|\Big| \\
&\qquad\qquad\qquad\quad\quad +C\Big||x|^{-b}|v|^{p-1}\big(I_\alpha\ast|x|^{-b}(|u|^{p-1}+|v|^{p-1})|u-v|\big)\Big|,
\end{align*}
we need to show that the following two inequalities
\begin{align}\label{b}
\nonumber
\big\||x|^{-b}(|u|^{p-2}+|v|^{p-2})|u&-v||I_\alpha\ast|x|^{-b}|u|^p|\big\|_{L_t^{\tilde q'}(I;L_{x}^{\tilde{r}',2})} \\
&\leq C\big(\|u\|^{2p-2}_{L_t^q(I;{\dot W}^s_{r,2})}+\|v\|^{2p-2}_{L_t^q(I;{\dot W}^s_{r,2})}\big) \|u-v\|_{L_t^q(I;L_{x}^{r,2})},
\end{align}
\begin{align}\label{c}
\nonumber
\big\||x|^{-b}|v|^{p-1}\big(I_\alpha\ast|x|^{-b}(|u&|^{p-1}+|v|^{p-1})|u-v|\big)\big\|_{L_t^q(I;L_{x}^{\tilde{r}',2})} \\
&\leq C\big(\|u\|^{2p-2}_{L_t^q(I;{\dot W}^s_{r,2})}+\|v\|^{2p-2}_{L_t^q(I;{\dot W}^s_{r,2})}\big) \|u-v\|_{L_t^q(I;L_{x}^{r,2})}
\end{align}
hold for the conditions given as in Theorem \ref{Hloc}.

Now we start proving \eqref{b}.
By making use of H\"older's inequality in Lorentz spaces \eqref{Ho}, we have
\begin{align}\label{ine1}
\nonumber
\Big\||x|^{-b}(|u&|^{p-2}+|v|^{p-2})|u-v||I_\alpha\ast|x|^{-b}|u|^p|\Big\|_{L_{x}^{\tilde{r}',2}} \\
&\leq \big\||x|^{-b}(|u|^{p-2}+|v|^{p-2})|u-v|\big\|_{L_x^{r_1,\frac{4(p+1)}{p+2}}}\big\|I_\alpha\ast|x|^{-b}|u|^p\big\|_{L_x^{r_3,\frac{4(p+1)}{p}}}
\end{align}
under the following conditions
\begin{equation}\label{2c3}
\frac{1}{\tilde r'}=\frac{2p-1}{r}+\frac{2b-\alpha-2s(p-1)}{n}=\frac{1}{r_1}+\frac{1}{r_3}, 
\end{equation}
\begin{equation*}\label{2c4}
\frac{1}{r_1}=\frac{p-1}{r}+\frac{b-s(p-2)}{n}, \quad \frac{1}{r_3}=\frac{p}{r}+\frac{b-\alpha-sp}{n}, 
\end{equation*}
\begin{equation}\label{2cc2}
s+\frac{\alpha-b}{p}<\frac{n}{r}<\min\Big\{\frac{s(p-2)+n-b}{p-1}, s+\frac{n-b+\alpha}{p}\Big\}.
\end{equation}
Applying \eqref{Ho} and the Sobolev embedding in Lorentz spaces \eqref{s} to the first term in \eqref{ine1} in turn, we see
\begin{align}\label{ine2}
\nonumber
\big\||x|^{-b}(|u&|^{p-2}+|v|^{p-2})|u-v|\big\|_{L_x^{r_1,\frac{4(p+1)}{p+2}}} \\
\nonumber
&\leq \||x|^{-b}\|_{L_x^{\frac{n}{b},\infty}} \big(\|u\|_{L_x^{\frac{rn}{n-rs},4(p+1)}}^{p-2}+\|v\|_{L_x^{\frac{rn}{n-rs},4(p+1)}}^{p-2})\|u-v\|_{L_x^{r,p+1}} \\
&\lesssim \|(-\Delta)^{s/2}u\|^{p-2}_{L_x^{r,2}} \|u-v\|_{L_x^{r,2}}
\end{align}
if
\begin{equation}\label{2cc}
0<b<n\quad \text{and} \quad s<\frac{n}{r}<\frac{s(p-2)+n}{p-1}.
\end{equation}
Here, for the last inequality we used the embedding of the Lorentz spaces (see \eqref{emb}) with respect to the second index.
On the other hand, applying Lemma \ref{HLS} to the second term in \eqref{ine1} and then using \eqref{Ho} and \eqref{s} with \eqref{emb}, we see
\begin{align}\label{ine3}
\nonumber
\big\|I_\alpha\ast|x|^{-b}|u|^p\big\|_{L_x^{\frac{rn}{r(b-\alpha-sp)+pn},\frac{4(p+1)}{p}}} &\lesssim \||x|^{-b}|u|^p\|_{L_x^{\frac{rn}{r(b-sp)+pn}, \frac{4(p+1)}{p}}} \\
\nonumber
&\leq \||x|^{-b}\|_{L_x^{\frac{n}{b},\infty}} \|u\|_{L_x^{\frac{rn}{n-rs},4(p+1)}}^{p} \\
&\lesssim \|(-\Delta)^{s/2}u\|^{p}_{L_x^{r,2}}
\end{align}
if 
\begin{equation}\label{2cc3}
\max\Big\{s, s+\frac{\alpha-b}{p}\Big\}<\frac{n}{r}<s+\frac{n-b}{p}.
\end{equation}
Lastly, applying H\"older's inequality in $t$ after combining \eqref{ine1}, \eqref{ine2} and \eqref{ine3}, we obtain
\begin{align*}
\|F(u)-F(v)\|_{L_t^{\tilde{q}'}(I ; L_{x}^{\tilde{r}',2})} &\lesssim \Big\|\|u\|^{2p-2}_{{\dot W}^s_{r,2}}+\|v\|^{2p-2}_{{\dot W}^s_{r,2}}\Big\|_{L_t^{\frac{q}{2p-2}}(I)} \|u-v\|_{L_t^q(I;L_{x}^{r,2})} \\
&\lesssim \big(\|u\|^{2p-2}_{L_t^{q}(I;{\dot W}^s_{r,2})}+\|v\|^{2p-2}_{L_t^{q}(I;{\dot W}^s_{r,2})}\big) \|u-v\|_{L_t^{q}(I; L_{x}^{r,2})}
\end{align*}
with
\begin{equation}\label{con1}
\frac{1}{\tilde q'}=\frac{2p-1}{q},
\end{equation}
which implies the desired estimate \eqref{b}.

The second estimate \eqref{c} is proved similarly.
Using H\"older's inequality in Lorentz spaces \eqref{Ho}, we have 
\begin{align}\label{ine4}
\nonumber
\Big\||x|^{-b}|&v|^{p-1}\big(I_\alpha\ast|x|^{-b}(|u|^{p-1}+|v|^{p-1})|u-v|\big)\Big\|_{L_{x}^{\tilde{r}',2}}\\
&\leq \big\||x|^{-b}|v|^{p-1}\big\|_{L_x^{r_5,\frac{4(p+1)}{p-1}}} \|I_\alpha\ast|x|^{-b}(|u|^{p-1}+|v|^{p-1})|u-v|\|_{L_x^{r_7,\frac{4(p+1)}{p+3}}}
\end{align}
with
\begin{equation*}
\frac{1}{\tilde r'}=\frac{2p-1}{r}+\frac{2b-\alpha-2s(p-1)}{n}=\frac{1}{r_5}+\frac{1}{r_7},
\end{equation*}
\begin{equation*}\label{cc2}
\frac{1}{r_5}=\frac{p-1}{r}+\frac{b-s(p-1)}{n}, \quad \frac{1}{r_7}=\frac{p}{r}+\frac{b-\alpha-s(p-1)}{n},
\end{equation*}
\begin{equation}\label{2cc4}
\max\Big\{s-\frac{b}{p-1}, \frac{s(p-1)+\alpha-b}{p}\Big\}<\frac{n}{r}<\min\{s+\frac{n-b}{p-1},\frac{s(p-1)+\alpha-b+n}{p}\}
\end{equation}
For the first term in \eqref{ine4}, using \eqref{Ho} and \eqref{s} in turn, we see
\begin{equation}\label{ine5}
\big\||x|^{-b}|v|^{p-1}\big\|_{L_x^{r_5,\frac{4(p+1)}{p-1}}} \leq \||x|^{-b}\|_{L_x^{\frac{n}{b}, \infty}} \|u\|^{p-1}_{L_x^{\frac{rn}{n-rs},4(p+1)}} \lesssim \|(-\Delta)^{s/2}v\|^{p-1}_{L_x^{r,2}}
\end{equation}
if
\begin{equation}\label{2cc7}
s<\frac{n}{r}<s+\frac{n}{p-1}.
\end{equation}
Here, we used \eqref{emb} with respect to the second index for the last inequality as before.
For the second term in \eqref{ine4}, applying Lemma \ref{HLS}, \eqref{Ho} and \eqref{s} with \eqref{emb} we see
\begin{align}\label{ine6}
\nonumber
\big\|I_\alpha\ast|x|^{-b}(&|u|^{p-1}+|v|^{p-1})|u-v|\big\|_{L_x^{r_7,\frac{4(p+1)}{p+3}}}\\
\nonumber
&\lesssim \||x|^{-b} (|u|^{p-1}+|v|^{p-1})|u-v|\|_{L_x^{\frac{rn}{pn+r(b-s(p-1))},\frac{4(p+1)}{p+3}}}\\
\nonumber
&\leq \||x|^{-b}\|_{L_x^{\frac{n}{b},\infty}} \big(\|u\|_{L_x^{\frac{rn}{n-rs},4(p+1)}}^{p-1}+\|v\|_{L_x^{\frac{rn}{n-rs},4(p+1)}}^{p-1}\big)\|u-v\|_{L_x^{r,p+1}}\\
&\lesssim \big(\|(-\Delta)^{s/2}u\|_{L_x^{r,2}}^{p-1} + \|(-\Delta)^{s/2}u\|_{L_x^{r,2}}^{p-1} \big)\|u-v\|_{L_x^{r,2}}
\end{align}
if
\begin{equation}\label{2cc6}
\max\Big\{s, \frac{s(p-1)+\alpha-b}{p}\Big\} < \frac{n}{r}< \frac{s(p-1)+n-b}{p}.
\end{equation}
Now, applying H\"older's inequality in $t$ after combining \eqref{ine4}, \eqref{ine5} and \eqref{ine6} as before, we obtain the desired estimate \eqref{c}. 

\subsection{Estimating \eqref{non2}}\label{sub}
In comparison with the previous proof, we first apply Lemma \ref{FLR} to handle $(-\Delta)^{s/2}$:
\begin{align}
\nonumber
\Big\|(-\Delta)^{s/2}&\Big(|x|^{-b}|u|^{p-2}u(I_{\alpha} \ast |x|^{-b}|u|^p)\Big)\Big\|_{L_x^{\tilde r',2}} \\
\label{ine7}
&\lesssim \big\|(-\Delta)^{s/2}(|x|^{-b}|u|^{p-2}u)\big\|_{L_x^{r_1,\frac{2(2p-1)}{p-1}}} \big\|I_{\alpha}\ast|x|^{-b}|u|^p\big\|_{L_x^{r_3,\frac{2(2p-1)}{p}}} \\
\label{ine8}
&\,\,\,\quad+\big\||x|^{-b}|u|^{p-2}u\big\|_{L_x^{r_5,\frac{2(2p-1)}{p-1}}}\big\|(-\Delta)^{s/2}(I_{\alpha}\ast|x|^{-b}|u|^p)\big\|_{L_x^{r_7,\frac{2(2p-1)}{p}}}
\end{align}
with 
\begin{equation*}
\frac{1}{\tilde r'}=\frac{2p-1}{r}+\frac{2b-\alpha-2s(p-1)}{n}=\frac{1}{r_1}+\frac{1}{r_3}=\frac{1}{r_5}+\frac{1}{r_7},
\end{equation*}
\begin{equation*}
\frac{1}{r_1}=\frac{p-1}{r}+\frac{b-s(p-2)}{n}, \quad \frac{1}{r_3}=\frac{p}{r}+\frac{b-\alpha-sp}{n},
\end{equation*}
\begin{equation*}
\frac{1}{r_5}=\frac{p-1}{r}+\frac{b-s(p-1)}{n}, \quad \frac{1}{r_7}=\frac{p}{r}+\frac{b-\alpha-s(p-1)}{n},
\end{equation*}
\begin{equation}\label{r1}
s+\frac{\alpha-b}{p}<\frac{n}{r}<\min\Big\{\frac{s(p-2)+n-b}{p-1}, \frac{s(p-1)+n-b+\alpha}{p}\Big\}.
\end{equation}
Now we will show that both \eqref{ine7} and \eqref{ine8} are bounded by $$\|(-\Delta)^{s/2}u\|_{L_x^{r,2}}^{2p-1},$$
which implies the desired estimate \eqref{non2} when ${1}/{\tilde q'}=(2p-1)/q.$

In order to bound \eqref{ine7}, we first apply Lemma \ref{FLR} to the first term to see
\begin{align}\label{ine}
\nonumber
\big\|&(-\Delta)^{s/2}(|x|^{-b}|u|^{p-2}u)\big\|_{L_x^{r_1,\frac{2(2p-1)}{p-1}}} \\
\nonumber
&\quad \leq \|(-\Delta)^{s/2}|x|^{-b}\|_{L_x^{\frac{n}{b+s},\infty}}\|u\|_{L_x^{\frac{rn}{n-rs},2(2p-1)}}^{p-1} \\
\nonumber
&\qquad \qquad \quad \quad+ \||x|^{-b}\|_{L_x^{\frac{n}{b},\infty}}\|(-\Delta)^{s/2}(|u|^{p-2}u)\|_{L_x^{\frac{rn}{n(p-1)-rs(p-2)},\frac{2(2p-1)}{p-1}}}\\
&\quad=: I+II
\end{align}
if 
\begin{equation}\label{2cc8}
s<\frac{n}{r}<\frac{s(p-2)+n}{p-1}.
\end{equation}
For the first part $I$, using the Sobolev embedding \eqref{s} with \eqref{emb}, we have
\begin{align}
\nonumber
\|(-\Delta)^{s/2}|x|^{-b}\|_{L_x^{\frac{n}{b+s},\infty}}\|u\|_{L_x^{\frac{rn}{n-rs},2(2p-1)}}^{p-1} &\lesssim \||x|^{-b-s}\|_{L_x^{\frac{n}{b+s},\infty}} \|(-\Delta)^{s/2}u\|_{L_x^{r,2}}^{p-1} \\
\label{ine10}
&\lesssim \|(-\Delta)^{s/2}u\|_{L_x^{r,2}}^{p-1}
\end{align}
when
\begin{equation}\label{cb}
0<b<n-s.
\end{equation}
On the other hand, we apply Lemma \ref{FCR} to the second part $II$ as follows:
\begin{align}\label{ine9}
\nonumber
\||x|^{-b}\|_{L_x^{\frac{n}{b},\infty}}\|&(-\Delta)^{s/2}(|u|^{p-2}u)\|_{L_x^{\frac{rn}{n(p-1)-rs(p-2)},\frac{2(2p-1)}{p-1}}}\\
&\quad\lesssim \|u\|_{L_x^{\frac{rn}{n-rs}, \frac{2(2p-1)}{p-2}}}^{p-2}\|(-\Delta)^{s/2}u\|_{L_x^{r,\frac{2(2p-1)}{3}}}\lesssim \|(-\Delta)^{s/2}u\|_{L_x^{r,2}}^{p-1}.
\end{align}
Here, for the second inequality in \eqref{ine9}, we applied \eqref{s} after using \eqref{emb}.

For the second term in \eqref{ine7}, we make use of Lemma \ref{HLS}:
\begin{align}
\nonumber
\big\|I_{\alpha}\ast|x|^{-b}|u|^p\big\|_{L_x^{r_3,\frac{2(2p-1)}{p}}} &\lesssim \||x|^{-b}|u|^p\|_{L_x^{\frac{rn}{pn+r(b-sp)},\frac{2(2p-1)}{p}}} \\
\nonumber
&\lesssim \||x|^{-b}\|_{L_x^{\frac{n}{b},\infty}} \|u\|_{L_x^{\frac{rn}{n-rs},2(2p-1)}}^{p} \\
\label{ine11}
&\lesssim \|(-\Delta)^{s/2}u\|_{L_x^{r,2}}^{p}
\end{align}
if
\begin{equation}\label{2cc9}
\max\Big\{s, s+\frac{\alpha-b}{p}\Big\}<\frac{n}{r}<s+\frac{n-b}{p}.
\end{equation}
Here, we used \eqref{Ho} for the second inequality, and \eqref{s} with \eqref{emb} for the last one.
Thus, by combining \eqref{ine}, \eqref{ine10}, \eqref{ine9} and \eqref{ine11}, the estimate \eqref{ine7} is bounded by $\|(-\Delta)^{s/2}u\|_{L_x^{r,2}}^{2p-1}$.

Finally, we bound \eqref{ine8}.
By using H\"older's inequality \eqref{Ho} and the Sobolev embedding \eqref{s} with \eqref{emb} in turn, the first term in \eqref{ine8} is bounded as 
\begin{equation}\label{ineq12}
\big\||x|^{-b}|u|^{p-2}u\big\|_{L_x^{r_5,\frac{2(2p-1)}{p-1}}} \leq \||x|^{-b}\|_{L_x^{\frac{n}{b},\infty}} \|u\|_{L_x^{\frac{rn}{n-rs},2(2p-1)}}^{p-1}\lesssim \|(-\Delta)^{s/2}u\|_{L_x^{r,2}}^{p-1}
\end{equation}
if 
\begin{equation}\label{2cc11}
s<\frac{n}{r}<s+\frac{n}{p-1}.
\end{equation}
On the other hand, applying Lemma \ref{HLS}, \eqref{Ho} and \eqref{s} with \eqref{emb}, the second term in \eqref{ine8} is bounded by
\begin{align}\label{ineq13}
\nonumber
\big\|(-\Delta)^{s/2}(I_{\alpha}\ast|x|^{-b}|u|^p)\big\|_{L_x^{r_7,\frac{2(2p-1)}{p}}} &= \big\||x|^{\alpha-s-n}\ast|x|^{-b}|u|^p\big\|_{L_x^{r_7,\frac{2(2p-1)}{p}}} \\
\nonumber
&\lesssim \||x|^{-b}|u|^p\|_{L_x^{\frac{rn}{np+r(b-sp)},\frac{2(2p-1)}{p}}} \\
\nonumber
& \leq \||x|^{-b}\|_{L_x^{\frac{n}{b},\infty}} \|u\|_{L_x^{\frac{rn}{n-rs},2(2p-1)}}^{p} \\
& \lesssim \|(-\Delta)^{s/2}u\|_{L_x^{r,2}}^{p},
\end{align}
if 
\begin{equation}\label{2cc10}
s<\alpha<n \quad \text{and} \quad \max\Big\{s, \frac{(p-1)s+\alpha-b}{p}\Big\}<\frac{n}{r}<s+\frac{n-b}{p}.
\end{equation}
Therefore, combining \eqref{ineq12} and \eqref{ineq13}, we have the bound $\|(-\Delta)^{s/2}u\|_{L_x^{r,2}}^{2p-1}$.

\subsection{Deriving the assumptions}
It remains to check the assumptions under which \eqref{non} and \eqref{non2} hold.
Inserting \eqref{con1} into the last condition in \eqref{2c1} implies 
$$\frac{2(2p-1)}{q}=2-\frac{n}{2}+\frac{n}{\tilde r}.$$
Substituting this into the second condition in \eqref{2c1} implies 
\begin{equation}\label{2c}
\frac{1}{\tilde r} = p-\frac{2}{n}-\frac{2p-1}r. 
\end{equation}
Note that \eqref{2c} is exactly same as the condition \eqref{2c3} when $p=1+\frac{2-2b+\alpha}{n-2s}$.
Substituting \eqref{2c} into the third condition in \eqref{2c1} is also implies 
\begin{equation}\label{2cc1}
\frac{n}{2}-\frac{2}{2p-1} \leq \frac{n}{r} \leq \frac{n}{2}-\frac{1}{2p-1},
\end{equation}
by which the first condition in \eqref{2c1} can be eliminated since $p\ge2$.

From the conditions \eqref{2cc2}, \eqref{2cc} and \eqref{2cc3} which are occurred in the proof of \eqref{b}, we have 
\begin{equation}\label{r2}
\max\Big\{s, s+\frac{\alpha-b}{p}\Big\}<\frac{n}{r}<\min\Big\{\frac{s(p-2)+n-b}{p-1}, s+\frac{n-b}{p}\Big\}.
\end{equation}
From the conditions \eqref{2cc4}, \eqref{2cc7} and \eqref{2cc6} in the proof of \eqref{c}, we also have
\begin{equation}\label{r3}
\max\Big\{s, \frac{s(p-1)+\alpha-b}{p}\Big\}<\frac{n}{r}<\frac{s(p-1)+n-b}{p}.
\end{equation}
The lower bound of $n/r$ in \eqref{r3} can be eliminated by the lower one of $n/r$ in \eqref{r2}.
Also, the upper bound of $n/r$ in \eqref{r2} is redundant by the upper one of $n/r$ in \eqref{r3} since $0<b<n-s$.
By combining these two conditions, we get
\begin{equation}\label{rr2}
\max\Big\{s, s+\frac{\alpha-b}{p}\Big\} < \frac{n}{r}< \frac{s(p-1)+n-b}{p}.
\end{equation}
Similarly, from the conditions \eqref{r1}, \eqref{2cc8}, \eqref{2cc9}, \eqref{2cc10} and \eqref{2cc11} arisen in Subsection \ref{sub}, we get
\begin{equation}\label{rr3}
\max\{s,s+\frac{\alpha-b}{p}\}<\frac{n}{r}<\frac{s(p-2)+n-b}{p-1}
\end{equation}
which is also eliminated by \eqref{rr2} since $0<b<n-s$.
Combining \eqref{2cc1} and \eqref{rr2}, we then get 
\begin{equation}\label{rr}
\max\Big\{s, s+\frac{\alpha-b}{p}, \frac{n}{2}-\frac{2}{2p-1}\Big\}< \frac{n}{r}<\min\Big\{\frac{s(p-1)+n-b}{p}, \frac{n}{2}-\frac{1}{2p-1}\Big\},
\end{equation}
which implies the assumption \eqref{as-r}.

Now we derive the assumption \eqref{as-1}.
We first make the lower bound of $n/r$ less than the upper one of $n/r$ in \eqref{rr}.
Then we have 
\begin{equation}\label{r4}
b+s<n, \quad s<\frac{n}{2}-\frac{1}{2p-1}
\end{equation}
\begin{equation}\label{r5}
\alpha+s<n, \quad ps+\alpha-b<\frac{pn}{2}-\frac{p}{2p-1},\quad b+\frac{(p-2)n}{2}<(p-1)s+\frac{2p}{2p-1}. 
\end{equation}
Here, the conditions in \eqref{r4} can be eliminated by \eqref{cb} and $s\leq 1$, respectively.
Inserting $s=\frac{n}{2}-\frac{2-2b+\alpha}{2(p-1)}$ into \eqref{r5}, we see 
\begin{equation}\label{e}
2b<-(2p-3)\alpha+2+(p-1)n, \quad 2b<-(p-2)\alpha+\frac{2p^2}{2p-1}, \quad \alpha<n+\frac{2}{2p-1}.
\end{equation}
Here, the upper bound of $\alpha$ in \eqref{e} can be eliminated by $\alpha<n$. 
On the other hand, substituting $s=\frac{n}{2}-\frac{2-2b+\alpha}{2(p-1)}$ into the following conditions (see \eqref{cb} and \eqref{2cc10})
\begin{equation*}
s<\alpha<n, \quad 0<b<n-s, \quad 0\leq s \leq 1
\end{equation*} 
implies 
\begin{equation}\label{b1}
\begin{gathered}
2b<(2p-1)\alpha+2-(p-1)n, \quad \alpha<n, \quad 0<2b<\frac{\alpha+(p-1)n+2}{p},\\
\alpha+2-(p-1)n \leq 2b \leq \alpha+2p-(p-1)n.
\end{gathered}
\end{equation}
Here, the second upper bound of $2b$ in \eqref{b1} is redundant by the last upper one of $2b$ in \eqref{b1}.
Making the lower bound of $2b$ less than the upper one of $2b$ in \eqref{e} and \eqref{b1}, we have
\begin{equation}\label{p1}
\begin{gathered}
(2p-3)\alpha<(p-1)n+2, \quad (p-2)\alpha<\frac{2p^2}{2p-1}, \quad (p-1)n<(2p-1)\alpha+2,\\
(p-1)n<\alpha+2p, \quad 0<\alpha<n,\quad \alpha<n+\frac{2(p-1)}{2p-1}.
\end{gathered}
\end{equation}
The last condition in \eqref{p1} can be eliminate the fifth one in \eqref{p1}.

To eliminate $p$, we rewrite \eqref{p1} with respect to $p$ as follows:
\begin{equation}\label{p}
\begin{cases}
2<p<\frac{n-\alpha+2}{n-2\alpha} \quad \text{if} \quad 0<\alpha\leq 1\\
2<p<\min\{\frac{n-\alpha+2}{n-2\alpha},\frac{5\alpha+\sqrt{9\alpha^2+16\alpha}}{4(\alpha-1)} \} \quad \text{if} \quad 1< \alpha\leq \frac{n}{2}\\
2<p<\min\{\frac{n-3\alpha-2}{n-2\alpha},\frac{5\alpha+\sqrt{9\alpha^2+16\alpha}}{4(\alpha-1)} \}\quad \text{if} \quad \frac{n}{2}< \alpha<n
\end{cases}
\end{equation}
and 
\begin{equation}\label{p2}
p<\frac{n+\alpha}{n-2}.
\end{equation}
Making the lower bounds of $p$ less than the upper ones of $p$ in \eqref{p} and \eqref{p2}, we get 
$$\max\Big\{n-4,\frac{n-2}{3}\Big\}<\alpha<n$$
which is the assumption \eqref{as-1}.
Indeed, when $n=3,4,5$, the lower bound $(n-2)/3$ of $\alpha$ is determined by $2<(n-\alpha+2)/(n-2\alpha)$ if $0<\alpha\leq n/2$. If $n/2<\alpha<n$, the lower bound of $\alpha$ becomes $n-4$ by $2<(n+\alpha)/(n-2)$, but which is redundant by the lower bound $n/2$ of $\alpha$.
When $n\ge6$, the lower bound $n-4$ of $\alpha$ comes from $2<(n+\alpha)/(n-2)$ if $1<\alpha<n$.

Finally, it remains to derive the assumption \eqref{as-2}. To do this, we write \eqref{p1} with respect to $\alpha$:
\begin{equation*}
\max\Big\{\frac{(p-1)n-2}{2p-1}, (p-1)n-2p\Big\}<\alpha<\min\Big\{n,\frac{2p^2}{(p-2)(2p-1)},\frac{(p-1)n+2}{2p-3}\Big\}.
\end{equation*}
Making the lower bound of $p$ less than the upper one of $p$, we have 
\begin{equation}\label{p3}
2\leq p<\frac{5n-4+\sqrt{9n^2-8n+16}}{4(n-2)}.
\end{equation}
Substituting $p=1+\frac{2-2b+\alpha}{n-2s}$ into \eqref{p3}, we see
\begin{equation*}
\frac{\alpha}{2}+1-\frac{(n-2s)(n+4+\sqrt{9n^2-8n+16})}{8(n-2)}<b\leq \frac{\alpha}{2}+1-\frac{n-2s}{2}
\end{equation*}
which implies the assumption \eqref{as-2}.
\end{proof}

\section{The well-posedness}\label{se4}
This section is devoted to proving Theorems \ref{Hloc} and \ref{Hglo} by the contraction mapping principle.
The nonlinear estimates just obtained above play an important role in this step.  
By Duhamel's principle, we first write the solution of the Cauchy problem \eqref{HE} as
\begin{equation}\label{2Duhamel1}
\Phi(u)=\Phi_{u_0}(u) = e^{it \Delta} u_0 - i\lambda  \int_{0}^{t} e^{i(t-\tau)\Delta}F(u)\, d\tau
\end{equation}
where $F(u)= \big(I_\alpha\ast|\cdot|^{-b}|u(\cdot,\tau)|^p\big)|\cdot|^{-b}|u(\cdot,\tau)|^{p-2}u(\cdot,\tau)$.
For appropriate values of  $T,M,N>0$ determined later, we shall show that
$\Phi$ defines a contraction map on
\begin{align*}
X(T,M,N)=\Big\lbrace u\in  C_t(I;H^s)\cap L_t^q(I;W_{r,2}^s):\sup_{t\in I}\|u\|_{H^s}\leq M,\ \|u\|_{L_t^q(I;W_{r,2}^s)} \leq N \Big\rbrace
\end{align*}
equipped with the distance
$$d(u,v)=\|u-v\|_{L_t^q(I;L_x^{r,2})}$$
where $I=[0,T]$ and the exponents $q, r$ are given as in Theorem \ref{Hloc}.

We first show that $\Phi$ is well-defined on $X$, i.e. $\Phi(u)\in X$ for $u\in X$.
By applying Plancherel's theorem, \eqref{du} and the nonlinear estimates \eqref{non} and \eqref{non2} with
\begin{equation}\label{2se1}
\frac{1}{\tilde{q}'}=\frac{2p-1}{q} \quad \text{and} \quad \frac{1}{\tilde r'}=\frac{2p-1}{r}+\frac{2b-\alpha-2s(p-1)}{n},
\end{equation}
we have
\begin{align}\label{2fg11}
\nonumber
\sup_{t\in I}\|\Phi(u)\|_{H^s}
&\leq C \| u_0 \|_{H^s} +C\sup_{t\in I} \Big\| \int_0^{t}  e^{-i\tau\Delta} \chi_{[0,t]}(\tau) F(u)\, d\tau \Big\|_{H^s}\\
\nonumber
&\leq C \| u_0 \|_{H^s}+C\|F(u)\|_{L_t^{{\tilde q}'}(I;L_x^{{\tilde r}',2})}+C\|(-\Delta)^{s/2} F(u)\|_{L_t^{{\tilde q}'}(I;L_x^{{\tilde r}',2})}\\
&\leq C \| u_0 \|_{H^s} +C\|  u \|^{2p-1}_{L_t^{q}(I;W_{r,2}^s) }.
\end{align}
Here, for the second inequality we also used that $\|f\|_{H^s} \lesssim \|f\|_{L^2}+ \|f\|_{\dot H^s}.$
On the other hand, by using \eqref{inho}, \eqref{non} and \eqref{non2} under the relation \eqref{2se1}, we see
\begin{align}\label{2fg33}
\nonumber
&\|\Phi(u)\|_{L_t^q(I;W_{r,2}^s)}\\
&\,\,\,\,\,\leq \| e^{it \Delta} u_0\|_{L_t^q(I;W_{r,2}^s)} + C\| F(u) \|_{L_t^{\tilde q'}(I;L_x^{\tilde r',2})}+ C\|(-\Delta)^{s/2} F(u) \|_{L_t^{\tilde q'}(I;L_x^{\tilde r',2})}\nonumber\\
&\,\,\,\,\,\leq \| e^{it \Delta} u_0\|_{L_t^q(I;W_{r,2}^s)} + C\| u \|^{2p-1}_{L_t^{q}(I;W_{r,2}^s)}.
\end{align}
By the dominated convergence theorem, we take here $T>0$ small enough so that
\begin{equation}\label{2as1}
\|e^{it\Delta} u_0\|_{L_t^q(I;W_{r,2}^s)} \leq \varepsilon
\end{equation}
for some $\varepsilon>0$ chosen later.
From \eqref{2fg11} and \eqref{2fg33}, it follows that
\begin{equation*}
\sup_{t\in I}\|\Phi(u)\|_{H^s}\leq C\|u_0 \|_{H^s}+ CN^{2p-1} \quad \text{and}\quad \|\Phi(u)\|_{L_t^q(I;W_{r,2}^s)}\leq \varepsilon+ CN^{2p-1}
\end{equation*}
for $u \in X$.
Therefore, $\Phi(u)\in X$ if
\begin{equation}\label{2az1}
C\|u_0 \|_{H^s}+ CN^{2p-1}\leq M \quad \text{and}\quad \varepsilon+ CN^{2p-1} \leq N.
\end{equation}

Next we show that $\Phi$ is a contraction on $X$, i.e., for $u,v\in X$ 
$$d(\Phi(u), \Phi(v))\leq \frac12 d(u,v).$$
Using \eqref{inho} and \eqref{non} by the similar argument employed to show \eqref{2fg33}, it follows that, for $u, v \in X$
\begin{align*}
d(\Phi(u), \Phi(v))&= \|\Phi(u)-\Phi(v)\|_{L_{t}^{q}(I; L_x^{r,2})} \\
&\leq 2C\|F(u)-F(v)\|_{L_t^{\tilde q'}(I;L_x^{\tilde r',2})} \\
&\leq 2C (\| u\|_{L_t^{q}(I;W_{r,2}^s)}^{2p-2} +\| v\|_{L_t^{q}(I;W_{r,2}^s)}^{2p-2})\|u-v\|_{L_t^{q}(I;L_x^{r,2})} \nonumber\\
&\leq 4CN^{2p-2} d(u,v)
\end{align*}
under the relation \eqref{2se1}.
Now by setting $M=2C\|u_0\|_{L^2}$ and $N=2\varepsilon$ for $\varepsilon>0$ small enough so that
\eqref{2az1} holds and $4CN^{2p-2} \leq1/2$,
it follows that $X$ is stable by $\Phi$ and $\Phi$ is a contraction on $X$.
Therefore, by the contraction mapping principle, there exists a unique local solution $u \in C(I ; H^s) \cap L_t^{q}(I ; W_{r,2}^s)$.

The continuous dependence of the solution $u$ with respect to initial data $u_0$ follows obviously in the same way;
if $u,v$ are the corresponding solutions for initial data $u_0,v_0$, respectively, then
\begin{align*}
d(u,v)&\leq d\big(e^{it \Delta}u_0, e^{it\Delta} v_0\big) +d\bigg(\int_{0}^{t} e^{i(t-\tau)\Delta}F(u)d\tau, \int_{0}^{t} e^{i(t-\tau)\Delta}F(v) d\tau\bigg)\\
&\leq C\|u_0-v_0\|_{L^2}+C\|F(u)-F(v)\|_{L_t^{\tilde q'}(I; L_x^{\tilde r', 2})}\\
&\leq C\|u_0-v_0\|_{L^2}+\frac12\|u-v\|_{L_t^{q}(I; L_{x}^{r,2})}
\end{align*}
under the relation \eqref{2se1}, which implies $d(u,v) \lesssim \|u_0-v_0\|_{H^s}$.

Since $u_0 \in H^s$, by Proposition \ref{KT}, the smallness condition \eqref{2as1} can be replaced by that of $\|u_0\|_{H^s}$ as
$$\|e^{it\Delta}u_0\|_{L_t^{q}(I;W_{r,2}^s)} \leq C\|u_0\|_{H^s} \leq \varepsilon$$
from which we can choose $T=\infty$ in the above argument to get the global unique solution.
It only remains to prove the scattering property.
Following the argument above, one can easily see that
\begin{align*}
\big\|e^{-it_2\Delta}u(t_2)-e^{-it_1\Delta}u(t_1)\big\|_{H^s}&=\Big\|\int_{t_1}^{t_2}e^{-i\tau\Delta}F(u)d\tau\Big\|_{H^s}\\
&\lesssim\|F(u)\|_{L_t^{\tilde q'}([t_1, t_2]; L_x^{\tilde r',2})}+\||\nabla|^s F(u)\|_{L_t^{\tilde q'}([t_1, t_2]; L_x^{\tilde r',2})}\\
&\lesssim\| u \|_{L_t^q([t_1,t_2]; W_{r,2}^s)}^{2p-1} \,\,\rightarrow\,\,0
\end{align*}
as $t_1,t_2\rightarrow\infty$. This implies that
$\varphi:=\lim_{t\rightarrow\infty}e^{-it\Delta}u(t)$ exists in $H^s$.
Moreover, 
$$u(t)-e^{it\Delta}\varphi= i\lambda  \int_{t}^{\infty} e^{i(t-\tau)\Delta}F(u) d\tau,$$
and hence
\begin{align*}
\big\|u(t)-e^{it\Delta}\varphi\big\|_{H^s}&\lesssim \bigg\|\int_{t}^{\infty} e^{i(t-\tau)\Delta}F(u) d\tau\bigg\|_{H^s}\\
&\lesssim\|F(u)\|_{L_t^{\tilde q'}([t,\infty); L_x^{\tilde r',2})}+\|(-\Delta)^{s/2}F(u)\|_{L_t^{\tilde q'}([t,\infty); L_x^{\tilde r',2})}\\
&\lesssim\| u \|_{L_t^{q}([t,\infty); W_{r,2}^s)}^{2p-1} \quad\rightarrow\quad0
\end{align*}
as $t\rightarrow\infty$.
This completes the proof.

\end{document}